\documentclass[11pt]{article}
\usepackage{epigamath}

\setpapertype{A4}


\usepackage[english]{babel}

\usepackage[dvips]{graphicx}     



\title{Conic bundles that are not birational to \\ numerical Calabi--Yau pairs}
\titlemark{Numerical Calabi--Yau pairs}
\author{J\'anos Koll\'ar}
\authoraddresses{
\authordata{J. Koll\'ar}{\firstname{J\'anos} \lastname{Koll\'ar}\\
\institution{Princeton University, Princeton NJ 08544-1000}\\
\email{kollar@math.princeton.edu}}}

\authormark{J.\ Koll\'ar}
\date{\vspace{-5ex}} 
\journal{\'Epijournal de G\'eom\'etrie Alg\'ebrique} 
\acceptation{Received by the Editors on June 24, 2016, and in final form on January 17, 2017. \\ Accepted on January 30, 2017.}



\acknowledgement{Partial financial support was provided  by  the NSF under grant number DMS-1362960.}

\let \cedilla =\c
\renewcommand{\c}[0]{{\mathbb C}}  

\renewcommand{\o}[0]{{\mathcal O}} 
\newcommand{\z}[0]{{\mathbb Z}}

\renewcommand{\r}[0]{{\mathbb R}} 

\renewcommand{\a}[0]{{\mathbb A}}

\newcommand{\p}[0]{{\mathbb P}}
\newcommand{\f}[0]{{\mathbb F}}
\newcommand{\q}[0]{{\mathbb Q}}
\newcommand{\map}[0]{\dasharrow}
\newcommand{\qtq}[1]{\quad\mbox{#1}\quad}

\newcommand{\pic}[0]{\operatorname{Pic}}

\newcommand{\supp}[0]{\operatorname{Supp}}    
\newcommand{\red}[0]{\operatorname{red}}

\newcommand{\bs}[0]{\operatorname{Bs}}

\newcommand{\sing}[0]{\operatorname{Sing}}

\newcommand{\chr}[0]{\operatorname{char}}

\newcommand{\simq}[0]{\sim_{\q}}

\newcommand{\tsum}[0]{\textstyle{\sum}}




\def\into{\DOTSB\lhook\joinrel\to}

\def\loccoh#1.#2.#3.#4.{H^{#1}_{#2}(#3,#4)}

\DeclareMathAlphabet{\mathchanc}{OT1}{pzc}%
                                {m}{it}

\newcommand{\simb}[0]{\stackrel{bir}{\sim}}

\newcommand{\sym}[0]{\operatorname{Sym}}

\newtheorem{thm}{Theorem}

\newtheorem{lem}[thm]{Lemma}
\newtheorem{cor}[thm]{Corollary}

\newtheorem{prop}[thm]{Proposition}

\newtheorem{complem}[thm]{Complement}
   

\newtheorem{defn}[thm]{Definition}

\newtheorem{say}[thm]{\!\!}
\newtheorem{exmp}[thm]{Example}


\newtheorem{rem}[thm]{Remark}

\newtheorem{defn-thm}[thm]{Definition--Theorem}  
\newtheorem{defn-lem}[thm]{Definition--Lemma}  

\newtheorem{main-exmp}[thm]{Main Example}
\newtheorem{baby-exmp}[thm]{Baby Example}




\numberwithin{equation}{thm}

\newcommand{\problem}{\refstepcounter{equation} {\it Problem \arabic{thm}.\arabic{equation}.}}
\newcommand{\Claim}{\refstepcounter{equation} {\it Claim \arabic{thm}.\arabic{equation}.}}

\begin{document}


\maketitle

\begin{prelims}

\def\abstractname{Abstract}
\abstract{Let $X$ be a general conic bundle over ${\mathbb P}^2$
with branch curve of degree at least 19. We prove that there is no normal projective variety $Y$ that is birational to $X$ and such that some multiple of its anticanonical divisor is effective.}

\keywords{Rationally connected variety, conic bundle, Calabi--Yau variety, birational equivalence.}

\MSCclass{14M22, 14J45, 14J20  (Primary); 14J32, 14E05 (Secondary).}

\vspace{0.2cm}

\languagesection{Fran\cedilla{c}ais}{%

\vspace{0.05cm}
\textbf{Titre. Fibr\'es en coniques qui ne sont pas birationnels \`a des paires de Calabi--Yau num\'eriques}
\commentskip
\textbf{R\'esum\'e.} Soit $X$ un fibr\'e en coniques g\'en\'eral sur ${\mathbb P}^2$
avec une courbe de branchement de degr\'e au moins 19. Nous montrons qu'il n'existe pas de vari\'et\'e projective normale $Y$ qui soit birationnelle \`a $X$ 
et telle qu'un multiple de son diviseur anticanonique soit effectif.}


\end{prelims}


\newpage
  Understanding the difference between rationally connected varieties and Fano varieties has long been a goal of birational geometry.
In any dimension, smooth Fano varieties come in finitely many families  
but rationally connected varieties form infinitely many families.
Thus one expects that not every rationally connected variety is birational to a Fano variety, but actual proofs of this fact have been quite subtle; see 
\cite{Sar81} or \cite{MR1311348}.

A more general form of this problem asks if every rationally connected variety $X$ is birational to the underlying variety of a  $\q$-Fano pair $(Y, \Delta)$.
Since  $\q$-Fano pairs form infinitely many families up-to birational equivalence \cite{MR2520058}, this is a harder variant.  This form of the question  was  posed in \cite{cas-gon}
and a negative answer is established  in \cite{kry}.
Closely related results describing 
Calabi--Yau fiber-space structures on certain Fano 3-folds are 
proved in \cite{chel-2004, MR2223680, MR2565543}.

Probably the most general question in this direction is whether
every rationally connected variety is birational to the underlying variety of a  {\it numerical Calabi--Yau} pair $(Y, \Delta)$. Here we allow the most general
definition of Calabi--Yau pairs, that is, $Y$ is normal, proper, 
$\Delta$ is a pseudo-effective $\r$-divisor, 
$K_Y+\Delta$ is $\r$-Cartier  and $K_Y+\Delta\equiv 0$, but we impose no restrictions on the singularities of $Y$. 
(For most purposes the more restrictive definition of 
Calabi--Yau pairs adopted in \cite{k-x-cy} is the best; but the above numerical version also seems natural.) 
It turns out that  the singularities of $Y$ are not very important 
and the difference between  effective and pseudo-effective divisors also
may not be significant,
but
allowing divisors with coefficient $>1$ in $\Delta$ leads to many more cases; see Example~\ref{product.exmp} and Definition~\ref{pot.eff.defn}. 
If $X$ is birational to a  numerical Calabi--Yau pair $(Y, \Delta)$
then the birational transform of $\Delta$ on $X$ is frequently a
quite interesting divisor. Understanding such divisors was a key  to proving  unirationality of degree 1 conic bundle surfaces  \cite{k-mel}.

We discuss several methods to show that certain conic bundles 
(see Definition~\ref{cb.defn.1})    are not
birational to any  numerical Calabi--Yau pair. 
Typical results are the following.

\begin{thm} \label{main.surf.thm} There are   conic bundles
$S\to \p^1$  defined over $\q$   that are
not birational to any numerical Calabi--Yau surface.
\end{thm}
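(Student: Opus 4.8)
The plan is to reduce the statement to a question about the pseudo-effectivity of the anticanonical class on a minimal model over $\q$, and then to obstruct it using the arithmetic of the degenerate fibres.

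First I would reduce to smooth models. Suppose $S$ is birational over $\q$ to a numerical Calabi--Yau pair $(Y,\Delta)$, so that $Y$ is normal and proper, $\Delta$ is pseudo-effective, and $K_Y+\Delta\equiv 0$; in particular $-K_Y\equiv\Delta$ is pseudo-effective. Passing to the minimal resolution $\pi\colon Z\to Y$ (over $\q$), one has $K_Z=\pi^*K_Y+\sum a_iE_i$ with all discrepancies $a_i\le 0$, since minimal resolutions of normal surface singularities are never $K$-positive. Hence $-K_Z\equiv\pi^*\Delta+\sum(-a_i)E_i$ is the sum of a pseudo-effective class and an effective divisor, so $-K_Z$ is pseudo-effective. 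Thus the singularities of $Y$ only help, and I may assume I have a smooth projective rational surface $Z/\q$, birational to $S$, with $-K_Z$ pseudo-effective; conversely any such $Z$ yields a numerical Calabi--Yau pair $(Z,\Delta)$ with $\Delta\equiv -K_Z$. The problem is therefore equivalent to producing a conic bundle $S\to\p^1$ over $\q$ for which \emph{no} smooth projective model $Z/\q$ has $-K_Z$ pseudo-effective.

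Next I would pass to a Mori fibre space. Running a $\gal(\bar\q/\q)$-equivariant $K_Z$-minimal model program contracts only $(-K)$-positive curves, and the push-forward of a pseudo-effective class is pseudo-effective; so if some $Z$ has $-K_Z$ pseudo-effective, then some $\q$-minimal Mori fibre space $Z'$ birational to $S$ also has $-K_{Z'}$ pseudo-effective. By Iskovskikh's classification of minimal rational surfaces over a field, $Z'$ is either a del Pezzo surface of Picard rank $1$ over $\q$ or a conic bundle $Z'\to\p^1$ of relative Picard rank $1$. Here I would choose $S$ to have a large discriminant, i.e.\ many degenerate fibres whose two components are interchanged by $\gal(\bar\q/\q)$; for such $S$ the conic bundle structure is birationally rigid over $\q$, so that every $\q$-Mori fibre space birational to $S$ is again a conic bundle over $\p^1$ with the same discriminant, and in particular with $K_{Z'}^2=8-n<0$, where $n$ is the number of degenerate fibres. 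This is precisely the point where a lower bound on $n$ is needed, and it rules out the del Pezzo case, where $-K$ would be ample and hence pseudo-effective.

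Finally I would rule out pseudo-effectivity of $-K_{Z'}$ on each such conic bundle. In $\pic(Z')^{\gal}\otimes\r\cong\r^2$, with its hyperbolic intersection form, the fibre class $F$ (with $F^2=0$ and $-K_{Z'}\cdot F=2$) spans one boundary ray of the pseudo-effective cone, and the opposite ray is spanned by the class $R$ of the minimal effective multisection defined over $\q$, with $R^2<0$ and $F\cdot R>0$. Writing $-K_{Z'}=aF+bR$ one finds $b=2/(F\cdot R)>0$ automatically, while $a\ge 0$ holds if and only if $-K_{Z'}\cdot R\ge 2R^2/(F\cdot R)$; so $-K_{Z'}$ is pseudo-effective exactly when this inequality holds. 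Since $K_{Z'}^2=8-n<0$, I would arrange the arithmetic of the branch points (for instance by realising $S$ via a suitable quaternion algebra over $\q(t)$ ramified at a Galois-general set of points) so that the smallest multisection available over $\q$ is forced to be so negative that the inequality fails. Then $-K_{Z'}$ is not pseudo-effective for any model, contradicting the reduction, and the theorem follows. The hard part will be the last two steps together: making the $\gal$-equivariant birational rigidity precise over $\q$, so that the conic bundle can never be linked to a del Pezzo model, while \emph{simultaneously} controlling the minimal multisection class $R$ over $\q$ so that $-K_{Z'}$ lies outside the pseudo-effective cone. Exhibiting explicit branch configurations over $\q$ that achieve both conditions at once is the real obstacle, because the existence and self-intersection of low-degree multisections is a delicate arithmetic property of the conic bundle rather than a geometric one.
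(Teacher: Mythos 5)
Your first two reductions are essentially the paper's own: resolution plus negativity of exceptional discrepancies to get a smooth model with $-K$ pseudo-effective, a Galois-equivariant MMP to reach a Mori fiber space, and Iskovskikh--Sarkisov rigidity (Theorem \ref{sark.thm}) for large discriminant to exclude del Pezzo models and fix the conic bundle structure. But your third step stops exactly where the work begins, in two respects. First, your criterion ``$-K_{Z'}$ is pseudo-effective iff $-K_{Z'}\cdot R\ge 2R^2/(F\cdot R)$'' involves a minimal multisection $R$ of unknown degree over $\p^1$, and you have no handle on that degree. The paper's Lemma \ref{lem5} supplies the missing structural fact: writing the negative extremal curve as $C\simq a(-K_S+m'F)$, the dualizing-sheaf estimate forces $a=1$ once $\delta(S)>12+6m$, so the extremal curve is a \emph{double} section, and pseudo-effectivity of $-K_S$ on some birational model becomes precisely the existence of a double section whose normalization has genus $\le 1$ (Corollary \ref{subc.lem.1}). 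Without this degree bound your inequality is not checkable against any arithmetic input.

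Second, and fatally, the construction over $\q$ --- which you yourself call ``the real obstacle'' --- is the actual content of the theorem, and the mechanism you gesture at (branch points in Galois-general position, a quaternion algebra ramified at such points) cannot work as stated: Example \ref{22.exmp} of the paper exhibits conic bundles $g(s)z^2=x^2-py^2$ over $\q$ with arbitrarily large branch locus in completely general position whose $-K$ \emph{is} birationally effective, because a conjugate pair of sections appears after a quadratic extension. So no condition on the branch locus alone, however generic, can suffice; one must control \emph{every} double section, and the paper does this $p$-adically. It extends $S\to\p^1_{\q}$ to a conic bundle over the two-dimensional regular scheme $\p^1_{\z_p}$, choosing $z^2=a(s)x^2+b(s)y^2$ with $b=(s^2-1)a(s)+pc(s)$ so that over the special fiber $C=\p^1_{\f_p}$ the bundle has no rational section (because $\bar a(1)$ is a non-square in $\f_p$), while the $2d$ roots of $\bar a$ are points where the fiber is a double line and the total space is regular. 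Lemma \ref{gen.br.big.lem}, which rests on the local Claim \ref{extend.say.5} that $x_0^2=sx_1^2+tx_2^2$ has no nonzero solutions over a two-dimensional regular local ring, then forces the branch locus of every double section to contain all $2d$ of those points without containing $C$, hence to have degree $\ge 2d$; so every double section has geometric genus $\ge d-1$, and taking $d=3$ (so $\delta(S)=14>12$) rules out double sections of genus $\le 1$ and finishes the proof. Some integral-model argument of this kind, or another concrete device bounding all double sections over $\q$, is indispensable; your outline does not contain one.
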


This property seems to depend very subtly on the coefficients involved in the definition of $S$ and we give only  sufficient conditions in  Example \ref{over.Q.exmps.exmp}. The following are some  concrete  special cases.

\begin{exmp} 
\rm Let $p\geq 11$ be a prime such that $p\equiv -1\mod 4$ and choose $m\in \z$ not divisible by $p$.
Then the conic bundle 
$$
\Bigl(z^2=\Bigl(\tfrac{s^{p-1}-1}{s^2-1}\Bigr) x^2+\bigl(s^{p-1}-1+mp\bigr) y^2\Bigr)
\subset \p^2_{xyz}\times \a^1_s
$$
is not birational---over $\q$---to any numerical Calabi--Yau surface.
\end{exmp}

In dimension 3 we get the following.

\begin{thm} \label{main.thm} 
Let $X_{d,2}\subset \p^2\times  \p^2$ be  a general hypersurface of bidegree $(d,2)$ 
over a field of characteristic $\neq 2,3,5$.
 Then $X_{d,2}$ is not birational to the underlying variety of a numerical Calabi--Yau pair for   $d\geq 7$.
\end{thm}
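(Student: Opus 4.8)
The plan is to exploit the conic bundle projection $\pi_1\colon X_{d,2}\to\p^2$ onto the first factor, whose fibres are the conics cut out in the second $\p^2$. By adjunction $-K_{X_{d,2}}=\mathcal O(3-d,1)|_{X_{d,2}}=(3-d)H_1+H_2$, where $H_i$ is the pullback of the hyperplane class from the $i$-th factor, and the discriminant (branch) curve $B\subset\p^2$ of $\pi_1$ has degree $3d$. For general $X_{d,2}$ the Grothendieck--Lefschetz theorem gives $\pic(X_{d,2})=\z H_1\oplus\z H_2$, and using the two fibrations $\pi_1$ (conics) and $\pi_2$ (plane curves of genus $(d-1)(d-2)/2$) one checks that $\overline{\mathrm{Eff}}(X_{d,2})=\r_{\ge0}H_1+\r_{\ge0}H_2$: any effective class is nonnegative on the general fibre of each projection. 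Hence $-K_{X_{d,2}}$ already fails to be pseudo-effective on $X_{d,2}$ itself as soon as $d\ge4$. The whole difficulty is that this must be upgraded to a birational statement.

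First I would reduce ``birational to a numerical Calabi--Yau pair'' to pseudo-effectivity of the anticanonical class on a single well-chosen model, using the framework of Definition~\ref{pot.eff.defn}. Writing $(Y,\Delta)$ for a hypothetical numerical Calabi--Yau pair birational to $X_{d,2}$, the relation $K_Y+\Delta\equiv0$ with $\Delta$ pseudo-effective says exactly that the numerical class $-K_Y$ is pseudo-effective. The remarks preceding Theorem~\ref{main.surf.thm} indicate that neither the singularities of $Y$ nor the distinction between effective and pseudo-effective are essential, so one may pass to a smooth (indeed terminal) model on which $-K$ is pseudo-effective; the genuinely delicate point, that $\Delta$ may carry coefficients $>1$, has to be absorbed here. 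Because $X_{d,2}$ is rationally connected, $K$ is not pseudo-effective, so running the $K$-minimal model program on such a model terminates in a Mori fibre space, and each step, being $K$-negative, preserves pseudo-effectivity of $-K$. Thus it suffices to prove that no Mori fibre space birational to $X_{d,2}$ has pseudo-effective anticanonical class.

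The crux is then twofold. I would first invoke birational rigidity of conic bundles with large discriminant (in the style of Sarkisov and Iskovskikh) to show that every Mori fibre space birational to $X_{d,2}$ is again a standard conic bundle $Y'\to S'$ over a rational surface, with discriminant curve $B'$ whose class is controlled by that of $B$. Second, for such a conic bundle I would use the discriminant formula relating $-K_{Y'}$, the relative anticanonical class, and the pair $(S',\tfrac12 B')$ to show that $-K_{Y'}$ is not pseudo-effective: the negativity forced by $B'$ cannot be compensated by any vertical or horizontal pseudo-effective correction. The explicit numerical estimate is exactly what demands $\deg B\ge19$, i.e. $d\ge7$ for the family $X_{d,2}$ (the value $d=6$, giving $\deg B=18$, falls just short). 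I expect this last inequality, together with the rigidity step that rules out exotic Mori fibre space models, to be the main obstacle: one must quantify precisely how much pseudo-effective anticanonical ``room'' a conic bundle over an arbitrary rational surface can have in terms of its discriminant, uniformly over all birational models and allowing boundary coefficients $>1$.
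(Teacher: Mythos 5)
Your opening reductions are sound and agree with the paper: the identification of ``birational to a numerical Calabi--Yau pair'' with birational pseudo-effectivity of $-K$, the passage to a terminal Mori fibre space model (the paper's Claim \ref{pot.eff.defn.1}), and the appeal to Sarkisov-type rigidity to force every such model to be again a conic bundle over $\p^2$ (Theorem \ref{sark.thm} and Complement \ref{sark.thm.c}, which in fact give the \emph{same} branch curve, not merely a controlled class). The fatal gap is your step (b). You propose to conclude by a purely numerical estimate, ``uniform over all birational models,'' showing that a conic bundle with large discriminant cannot have pseudo-effective anticanonical class. No such estimate exists: Example \ref{K3.exmp} of the paper exhibits smooth minimal conic bundles over $\p^2$ with \emph{smooth branch curve of arbitrarily high degree} for which $|-K_X|\neq\emptyset$ (it contains a K3 surface). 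So the ``negativity forced by $B'$'' can indeed be compensated, the dichotomy is not decided by $\deg B'$ at all, and this is precisely why the theorem requires $X_{d,2}$ to be \emph{general} rather than being a statement about any conic bundle with $\deg B\geq 19$. Your guess that the bound $d\geq 7$ emerges from such an estimate is also off: in the paper $19$ comes from Lemma \ref{lem5}, applied after restricting to the generic line $L\subset\p^2$, where $-K_X|_S=-K_S+F$ forces $m=1$ and hence the hypothesis $\delta(S)=\deg B_X>12+6m=18$.

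What is missing is the geometric mechanism that replaces your step (b). Lemma \ref{lem5} on the generic-line surface shows that any pseudo-effective representative of $-K_{X'}$ must contain a rigid irreducible double section whose normalization has genus $\leq 2$; globalizing, Theorem \ref{main.surf.cor2} proves that for $\deg B_X\geq 19$, $-K_X$ is birationally pseudo-effective \emph{iff} $X$ carries a generically finite double section $D$ whose branch curve $B_D\subset\p^2$ has degree $\leq 6$ (a K3-type double section). The second half of the proof then rules this out using generality: $B_D$ must meet $B_X$ with intersection multiplicity $\geq 2$ at every point (Proposition \ref{main.3.prop}), respectively pass through all nodes of $B_X$ (Lemma \ref{good.nodes.lem}), and a dimension count for general curves (Lemma \ref{den.curve.ok}) or the explicit diagonal construction (Corollary \ref{good.nodes.lem.cor}) shows no such sextic exists for general branch data; a boundedness/specialization argument then carries this to the general $X_{d,2}$. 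None of this--double sections, everywhere-tangency, generality--appears in your proposal, and since your intended substitute is contradicted by Example \ref{K3.exmp}, the proposal does not close.
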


I could not write down explicit examples with smooth branch curve,
but it is easy to get many with reducible branch curve  using Corollary 
\ref{good.nodes.lem.cor}.

It is quite likely that the theorem also holds over any infinite field. The main arguments in our paper work whenever the  characteristic is $\neq 2$, but a key reference  \cite{Sar81}
is stated in the literature only for  characteristic $\neq 2,3$ and
we also use resolution of singularities.

The most important numerical invariant of a  conic bundle
$\pi:X\to \p^n$ is the degree of its branch divisor $B_X\subset\p^n$.
Typical results say that for smooth branch divisors the degree is the only important invariant and  
the higher the degree of $B_X$, the more complicated $X$ is.
This is, however, not the case for our question. We see 
in Examples \ref{K3.exmp}--\ref{22.exmp} that 
there are conic bundles
$\pi:X\to \p^2$ with smooth branch curve of arbitrary high degree
(resp.\ $\pi:S\to \p^1$ with many singular fibers) 
 that are  birational to  numerical Calabi--Yau varieties.

The following example shows that many varieties are   birational to the underlying variety of a smooth  numerical  Calabi--Yau pair.

\begin{exmp} \label{product.exmp}
\rm If $(Y, \Delta)$ is  a  numerical Calabi--Yau pair and $\Delta\neq 0$
then $Y$ is uniruled by \cite{mi-mo} but $Y$ need not be rationally connected.
In fact,   for   any smooth, projective variety $X$,
the product $X\times \p^1 $ is  birational to a  numerical Calabi--Yau pair.
To see this, let  $H$ be an ample divisor on $X$ such that $H-K_X$ is effective and note that    the anticanonical class of $\p_X\bigl(\o_X\oplus\o_X(H)\bigr)$
is effective.  It is the sum of twice the negative section and of
the pull-back of  $H-K_X$.

In particular,  the conic bundles in Theorem \ref{main.thm} are not birationally ruled. However, we use  even  stronger  non-rationality results of \cite{Sar81}  during the proof. 
\end{exmp}

\begin{defn} \label{cb.defn.1}
\rm A {\it conic bundle} 
is a flat, proper morphism $\pi:X\to Z$ such that every fiber of $\pi$ is isomorphic to a plane conic.  
We are mainly interested in the cases when $Z$ is regular, but for the basic definition it is enough to assume that $Z$ is normal.
(For many purposes one should allow non-flat morphisms and more singular fibers, but for us the  restrictive version is more convenient.) 

A  conic bundle is called {\it extremal}
if for every codimension 1 point $z\in Z$ the fiber $X_z$ is irreducible over $k(z)$. Equivalently, the relative Picard number is 1.  

A  conic bundle is called {\it minimal} if it is extremal and
has no rational sections. (This is the ``right'' definition for conic bundles but note that 
 a product  $Z\times \p^1\to Z$ is extremal but not  minimal in our sense.)
If $X$ is regular then  $\pi:X\to Z$ is minimal
iff $\pic(X)=\pi^*\pic(Z)\oplus\z[\omega_{X/Z}^{-1}]$; this is the key property that we are interested in.
This is equivalent to saying that if $L$ is any line bundle on $X$ and
$C\subset X$ is  a fiber then  $\deg (L|_C)$ is even and if
$C$ is reducible then $L|_C$ has the same degree on both
irreducible components.

The {\it branch locus,} denoted by $B_X\subset Z$,  is the 
subscheme parametrizing singular fibers of $\pi$. Set theoretically
it consists of points $z\in Z$ such that
$X_z:=\pi^{-1}(z)$ is singular.  If $X$ is regular then $B_X$ is reduced.
In general, the scheme structure is given by the formula
(\ref{extend.say.3}) which also shows that 
$B_X$ has pure codimension 1 in $Z$,
except possibly when  $\chr k(Z)= 2$.
In order to avoid various complications, we
assume  from now on that all residue characteristics are $\neq 2$.
Our main interest is in conic bundles over $\c$ or $\q$, but we will
use some examples that are defined over $\z_p$.

Assume that $Z$ is  regular and let $z\in B_X$ be a regular point. Then 
 $X_z$ is a pair of lines and $X$ is regular along $X_z$; see Paragraph \ref{extend.say}.
This defines a
double cover  $\tilde B_X\to B_X$ that is \'etale  over the regular locus of $B_X$. Then $X\to Z$ is minimal
iff, for every irreducible component $B_i\subset B_X$, the corresponding
$\tilde B_i$ is also irreducible. 

I do not know a good general introduction to conic bundles, but everything can be gleaned from the basic sources \cite{MR0220734, MR0472843, Sar81, MR899398}  or \cite[Chap.4]{MR1798984}, \cite[3.11-13]{ksc}. 
The key results are discussed in Paragraphs \ref{extend.say}--\ref{sark.thm.c}.
\end{defn}

\begin{defn} \label{pot.eff.defn}
\rm Let $k$ be a field and $X$ a normal, proper  $k$-variety of dimension $n$.
A divisor class $B$ is called {\it effective} (resp.\ {\it $\q$-effective})
if $B$ is linearly  (resp.\  $\q$-linearly) equivalent to an
effective $\z$-divisor  (resp.\  $\q$-divisor). An $\r$-divisor
$B$ is called {\it pseudo-effective} if its class in
$\operatorname{N}_{n-1}(X)_{\r}$ 
is a limit of  $\q$-effective divisors. 

Birational transformation of   divisors does not preserve linear equivalence,
 so it is not very useful to ask whether a divisor is  birationally effective or not. 
However, the birational transform of a  mobile linear system is well defined
and the canonical class makes sense on any birational model of $X$.


Let  $|M|$ be a mobile linear system on $X$. We say that 
$-K_X+|M|$ is {\it birationally effective} 
(resp.\ {\it birationally $\q$-effective}  or {\it birationally pseudo-effective})
if there is a normal,
proper $k$-variety $\phi: X'\simb X$ such that $-K_{X'}+\phi_*|M|$ is  effective (resp.\ $\q$-effective or pseudo-effective).

For now our main interest is in the case $|M|=0$.

We stress that we do not impose  a priori restrictions on the singularities of $X'$  but it is easy to improve them, at least in characteristic 0.

Assume that $-K_{X'}\simq D'$ is pseudo-effective and 
let $p:X''\to X'$ be a terminal modification  \cite[1.33]{kk-singbook}.
Write $-K_{X''}\simq E''+D''$ where $E''$ is $p$-exceptional and $D''$ is the birational transform of $D'$. By the Negativity lemma  \cite[3.39]{km-book}
we see that $E''$ is an effective $\r$-divisor, hence $-K_{X''}$ is pseudo-effective.

If $K_{X''} $ is not numerically trivial, then it is also not
pseudo-effective, hence a suitable minimal model program terminates with a 
Mori fiber space  $X^m\to Z$. That is, $-K_{X^m}$ is relatively ample and
the relative Picard number of $X^m/Z$ is 1.
Note that  $\phi:X''\map X^m$ is a birational contraction, that is,
$\phi^{-1}$ has no exceptional divisors. Thus if $-K_{X''}\simq D''$
then $-K_{ X^m}=-\phi_*(K_{X''})\simq\phi_*(D'')=:D^m$ shows that $-K_{ X^m} $ is also pseudo-effective.

Thus 
 the following holds.
\medskip

\Claim \label{pot.eff.defn.1} 
Let $k$ be a field of characteristic 0 and $X$ a normal $k$-variety. Then $-K_X$ is  birationally pseudo-effective iff 
\begin{enumerate}
\item[(a)] either $X$ has canonical singularities and $K_X$ is numerically trivial,
\item[(b)]or 
there is a
projective variety $ X^m$  with terminal singularities such that
$ X^m$ is birational to $X$, $-K_{X^m} $ is  pseudo-effective
and there is
a Mori fiber space structure $\pi: X^m\to Z$.\qed
\end{enumerate}
\medskip

This  suggests that 
 our question should be treated using the
Noether--Fano method. (See \cite[Chap.5]{ksc} for an introduction and
\cite{MR1798984} for a more detailed treatment.)
This is  the approach taken in \cite{kry} and the examples given there are also not birational to a
numerical Calabi--Yau pair. 
We work with conic bundles. These have a rich birational geometry
yet all of their birational models are quite well understood.

The characteristic 0 assumption is only needed to guarantee that
resolutions and Mori fiber space models exist. Thus  Claim \ref{pot.eff.defn.1}
 holds over any field $k$ if $\dim X=2$ or if $\dim X=3$ and $\chr k>5$.

 Note also that in birational geometry one would usually like to control
the singularities of the pair  $(X',\Delta')$, not just the singularities of $X'$. However, even if $\Delta '$ is effective, we do not assume that
the coefficients in $\Delta'$ are $\leq 1$, thus  a similar reduction to the
log canonical case is not possible.
\end{defn}

\begin{rem} \rm Our notion is  close to the concept of {\it Fano type}  varieties introduced in \cite{MR2448282},
which  asks for a normal,
proper $k$-variety $X'\simb X$ such that $-K_{X'}\simq \Delta'+H'$
where $(X', \Delta')$ is klt and $H'$ is an ample $\q$-divisor.
(That is,  $(X', \Delta')$ is a log-Fano pair.) 
>From the technical point of view the difference between
$-K_X$ being  birationally pseudo-effective and  $X$ being of Fano type can be substantial, but in our examples none of the difficulties appear.
\end{rem}

\begin{say}[Outline of the proof of Theorems \ref{main.surf.thm} and \ref{main.thm}]
\rm Start with  $X\to \p^2$ over $\c$ and restrict to a generic line $\p^1\subset \p^2$.
We get a 2-dimensional conic bundle  $S\to \p^1$ over the function field $\c(s,t)$.
A slight complication is that $-K_X|_S=-K_S+F$ where $F$ is a general fiber,
but this is easy to deal with. Next we study when  $-K_S+F$ is   birationally pseudo-effective   for  2-dimensional conic bundles  $S\to \p^1$ over $\c(s,t)$, or, more generally, over any field $k$. 
We show in Corollary \ref{subc.lem.1} that this holds iff  $S$ contains a double section $C\subset S$ whose normalization has genus  $\leq 2$. The double section $C\subset S$ then extends to a generically finite 
double section $D\subset X$  whose normalization is a birational to a K3 surface. (The projection $\pi_D:D\to \p^2$ has degree 2 and the ramification curve is a sextic, but $\pi_D$  need not be finite.)
Then we study how the branch curve of $X\to \p^2$ and the 
branch curve of $D\to \p^2$ intersect.

In order to prove Theorem \ref{main.surf.thm}, we extend a 2-dimensional conic bundle over
$\p^1_{\q}$ to a 3-dimensional conic bundle over
$\p^1_{\z}$ and argue as above.  The answer seems to depend on subtle properties of $S\to \p^1_{\q}$. In particular, the branch divisor $B_S\subset \p^1_{\q}$ alone is not enough
to decide what happens; see Example \ref{22.exmp}.
\end{say}

\subsection*{Surface conic bundles}

Fix an arbitrary field $k$. Assume for simplicity that $\chr k\neq 2$. 
Let $S$ be a smooth surface over $k$ and 
$S\to \p^1$  a minimal conic bundle. The number of singular geometric fibers, which is the degree of the branch locus $B_S$, is denoted by $\delta(S)$.
  A typical fiber is denoted by $F$.
Thus $(K_S^2)=8-\delta(S)$ and $(K_S\cdot F)=-2$. 

\begin{lem} \label{lem5}
Let $k$ be a field and $\pi: S\to \p^1$ a minimal conic bundle.
Fix a natural number $m$ such that $\delta(S)> 12+6m$. 
Assume that $-K_S+mF$  is pseudo-effective.

Then there is a unique  irreducible curve $C\subset S$ such that
$(C^2)<0$. Furthermore, $\pi:C\to \p^1$ has degree 2   and $|-K_S+mF|=C+|bF|$ for some $b\geq 0$. 
In particular, 
$-K_S+mF$ is  effective.
\end{lem}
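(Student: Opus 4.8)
The plan is to exploit the rank-two Picard lattice of a minimal conic bundle together with the large-$\delta$ hypothesis, isolating the unique negative curve and pinning down its fibre degree. First I would record the numerical setup. Minimality gives $\pic(S)=\pi^*\pic(\p^1)\oplus\z[\omega_{S/\p^1}^{-1}]$; writing $L:=\omega_{S/\p^1}^{-1}$ one has $F^2=0$, $(K_S\cdot F)=-2$ and $(L\cdot F)=2$, so $\pic(S)=\z F\oplus\z L$ has rank $2$. In particular every divisor class on $S$ meets $F$ in an even number, a parity constraint that will be decisive below. Since $\pi$ is a Mori fibre space, $\r_{\ge 0}[F]$ is one of the two extremal rays of the two-dimensional cone $\nec{S}$.

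Next I would produce the curve $C$. A direct computation gives $(-K_S+mF)^2=K_S^2+4m=8-\delta+4m$, which is $<-4-2m<0$ by the hypothesis $\delta>12+6m$. If every irreducible curve on $S$ had non-negative self-intersection, then---since distinct irreducible curves meet non-negatively---every effective, hence every pseudo-effective, class would have non-negative self-intersection, contradicting $(-K_S+mF)^2<0$. Thus some irreducible $C$ has $(C^2)<0$. Such a curve spans an extremal ray of $\nec{S}$, and this ray is not $\r_{\ge 0}[F]$ because $F^2=0$; as $\nec{S}$ has only two extremal rays, $C$ is unique and $\nec{S}=\r_{\ge0}[C]+\r_{\ge0}[F]$.

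Then comes the heart of the argument: showing $(C\cdot F)=2$. Because $(C^2)<0$, $C$ is horizontal, so $d:=(C\cdot F)\ge 1$, and by the parity remark $d$ is even, hence $d\ge 2$. Writing $-K_S+mF=\tfrac{2}{d}C+\beta F$ in the basis $\{C,F\}$, membership in $\nec{S}$ forces $\beta\ge 0$. I would then eliminate $(C^2)$ between the adjunction formula $(K_S\cdot C)=2p_a(C)-2-(C^2)$ and the identity $K_S^2=8-\delta$, turning $\beta\ge 0$ and $\delta>12+6m$ into a single numerical inequality in $d$, $p_a(C)$, $m$ and $\delta$. The inequality rules out $d\ge 4$ for every value of $p_a(C)\ge 0$; combined with $d$ even and $d\ge 2$ this leaves only $d=2$. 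I expect this to be the main obstacle: the inequality by itself still permits the odd value $d=3$, and it is precisely the parity coming from minimality that excludes it, so both ingredients are genuinely needed.

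Finally I would read off the linear system. With $d=2$ the $C$-coefficient equals $1$, so $-K_S+mF\sim C+bF$ with $b=\beta$; integrality of $b$ follows from $\pic(S)=\z F\oplus\z L$ and $b\ge 0$ from pseudo-effectivity. Restricting to $C$ gives $\deg_C\o_C(C+bF)=(C^2)+2b$, which the large-$\delta$ estimate shows is negative, so $H^0\bigl(C,\o_C(C+bF)\bigr)=0$; hence $C$ is the fixed part of $|{-K_S+mF}|$ while the movable part is $|bF|=\pi^*|\o_{\p^1}(b)|$. Therefore $|-K_S+mF|=C+|bF|$, and in particular $-K_S+mF$ is effective.
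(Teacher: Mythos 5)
Your strategy is essentially the paper's own: writing $C\simq a(-K_S+m'F)$ (the paper's normalization, which is exactly your decomposition $-K_S+mF=\tfrac2d C+\beta F$ with $a=d/2$ and $m'=m-\beta$), then combining adjunction, extremality of $[C]$, and the parity forced by minimality; your existence/uniqueness argument for $C$ and the concluding fixed-part computation are fine. The gap is in the decisive numerical step: you exclude $d\ge 4$ \emph{assuming} $p_a(C)\ge 0$, and that inequality is not available here. The lemma is stated over an arbitrary field $k$ (and the paper applies it over $\q$, $\q_p$ and $\c(s,t)$), so the $k$-irreducible curve $C$ may be geometrically reducible, and then its arithmetic genus can be negative: a conjugate pair of disjoint sections has $p_a=-1$ (exactly what occurs in Example \ref{22.exmp}), and four conjugate disjoint sections give $C\cdot F=4$ and $p_a(C)=-3$. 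Such curves genuinely occur on minimal conic bundles: in the sharpness example immediately following Lemma \ref{lem5} (the pull-back of $B_Q\p^2\to\p^1$ by a degree $4$ map, so $\delta(S)=12$), the four conjugate $(-4)$-sections form a $k$-irreducible curve $C\sim -2K_S$ with $C\cdot F=4$ and $p_a(C)=-3$, while $-K_S\simq\tfrac12 C$ is pseudo-effective. Note that your inequality, run with the bound $p_a(C)\ge 0$, would exclude $a\ge 2$ already whenever $\delta(S)>9+6m$; since that example has $\delta=12$ and $m=0$, your justification would "prove" a false statement, so the restriction to $p_a(C)\ge 0$ is not a harmless simplification but precisely where the content lies.

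The repair is the component count the paper uses. Every geometric component of $C$ is horizontal, so $C_{\bar k}$ has at most $C\cdot F=2a$ irreducible components, each contributing at least $-2$ to $\deg\omega_C$, and intersections among components only increase this degree; hence $2p_a(C)-2=\deg\omega_C\ge -4a$, i.e.\ $p_a(C)\ge 1-2a$, not $\ge 0$. Rerunning your elimination with this weaker bound: for $a\ge 2$, adjunction gives $2p_a(C)-2=a(a-1)(8-\delta)+2a(2a-1)m'$ with $m'=m-\beta\le m$, and the inequality $\ge -4a$ rearranges to $\delta-8\le \bigl(4+\tfrac2{a-1}\bigr)m'+\tfrac4{a-1}\le 6m+4$, contradicting $\delta>12+6m$. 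With this one substitution your proof closes and coincides with the paper's; the threshold $12+6m$ is calibrated exactly to the bound $p_a\ge 1-2a$, and the $\delta=12$ example shows it is sharp.
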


\begin{proof} By assumption there is a sequence of effective $\q$-divisors
$D_t$ converging to $-K_S+mF$. 
Note that $(-K_S+mF)^2=8-\delta(S)+4m<0$, hence  $(D_t^2)<0$ for some $t$.
So there is a $k$-irreducible component $C\subset \supp D_t$ with $C^2<0$. Thus the cone of curves is generated by $C$ and a fiber $F$. 

Write $C\simq a(-K_S+m'F)$. Here $a$ is an integer but $m'$ could be rational. 
The degree of the dualizing sheaf  of $C$ is
$$
\deg \omega_C=2p_a(C)-2=C(C+K_S)=a(a-1)(K_S^2)+2a(2a-1)m'.
$$
Since $C_{\bar k}$ has at most $2a$  irreducible components, $\deg \omega_C\geq -4a$, hence
$$
a(a-1)(K_S^2)+2a(2a-1)m'\geq -4a.
$$
If $a\geq 2$ then this rearranges to
$$
\delta(S)-8=-(K_S^2)\leq \bigl(4+\tfrac2{a-1}\bigr)m'+\tfrac4{a-1}\leq 6m'+4.
$$
Note that  $  -K_S+mF\simq \frac1{a}C +(m-m')F$, hence
$m'\leq m$ since $-K_S+mF$  is pseudo-effective.
Thus, if  $\delta(S)> 12+6m$ then $a=1$.


Note that we also could have used Bend-and-break
(as stated in \cite[II.5.5.3]{rc-book}) to show that $a\leq 2$.
\qed \end{proof}

\begin{exmp} \rm There are  minimal conic bundles $S\to \p^1$
with $\delta(S)= 12$ for which  $-2K_S$ is effective but
$-K_S$ is not effective. 

To construct such examples,
let $Q\subset \p^2$ be an irreducible  degree 4 point. Then $B_Q\p^2\to \p^1$
is a minimal conic  bundle with with $\delta=3$. 
The exceptional curves give a conjugate set of 4 sections, each with
self-intersection $-1$.

Pull it back by a general degree $r$ map
$\p^1\to \p^1$ to get  $S\to \p^1$. We get a minimal conic  bundle $S\to \p^1$ with $\delta(S)=3r$ and a conjugate set of 4 sections, each with
self-intersection $-r$. The sum of these 4 sections is  in $|-2K_S+(r-4)F|$.
Thus the bound $\delta(S)>12+6m$ in Lemma \ref{lem5}  is sharp.

(It is interesting to note that for $r=4$ contracting the 4 sections gives $S\to T$ where $T$ is a singular
Enriques surface. It has 4 singular points and $2K_T\sim 0$.) 
\end{exmp}

\begin{cor} \label{subc.lem.1} Let $k$ be a field and $S\to \p^1$ a minimal conic bundle. Assume that $\delta(S)>12+6m$. Then $-K_S+mF$ is birationally  pseudo-effective
iff $S$ contains a double section $C\subset S$ whose normalization has genus  $\leq m+1$.
\end{cor}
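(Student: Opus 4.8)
The plan is to prove both implications by relating an arbitrary birational witness to a \emph{minimal conic bundle model over} $\p^1$, on which Lemma~\ref{lem5} applies directly. The bridge between such models is the elementary transformation (blow up a point on a fiber, then blow down the strict transform of that fiber). These generate all birational maps of conic bundles over $\p^1$, they preserve minimality and the number $\delta$ of singular fibers, and—crucially—they preserve the normalization genus of a double section, since the strict transform of a bisection is never contracted. Throughout I use that on a minimal conic bundle any double section $C$ satisfies $C\simq -K+bF$, because $\pic=\z F\oplus\z[\omega_{S/\p^1}^{-1}]$ and $C\cdot F=2=(-K)\cdot F$ force the $(-K)$-coefficient to be $1$; adjunction then gives $2p_a(C)-2=2b$, i.e.\ $p_a(C)=b+1$.

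For the direction ($\Leftarrow$), suppose $S$ carries a double section $C$ whose normalization has genus $g\le m+1$. First I would remove the singularities of $C$: an elementary transformation centered at a singular point $p\in C$ (chosen on a smooth fiber) replaces $C$ by the image of its strict transform under the blow-up at $p$, which has strictly milder singularities, while the ambient surface stays a minimal conic bundle with the same $\delta$. Iterating, I reach a minimal conic bundle $S''\to\p^1$, birational to $S$, carrying a \emph{smooth} double section $C''$ of genus $g$. Then $p_a(C'')=g$, so $C''\simq -K_{S''}+(g-1)F''$ with $g-1\le m$, whence
\[
-K_{S''}+mF''\simq C''+(m-g+1)F''
\]
is effective. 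Since $\phi_*|mF|=|mF''|$ under elementary transformations, $S''$ is a witness, so $-K_S+mF$ is birationally (indeed effectively) pseudo-effective. Note that this direction uses neither Lemma~\ref{lem5} nor the bound on $\delta$.

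For the direction ($\Rightarrow$), let $\phi\colon S'\simb S$ be a witness, so that $-K_{S'}+M'$ is pseudo-effective with $M'=\phi_*|mF|$; the moving part of $M'$ is the birational transform of $\pi$, defining a rational fibration $S'\map\p^1$. Following the minimal model discussion preceding Claim~\ref{pot.eff.defn.1}, I would run a minimal model program, relative to this fibration, arriving at a minimal conic bundle $\bar S\to\p^1$ birational to $(S,\pi)$. Because every step is a birational contraction, pushforward preserves pseudo-effectivity, so $-K_{\bar S}+mF$ is pseudo-effective, with $F$ the fiber of $\bar S$. As $\bar S$ is connected to $S$ by elementary transformations, $\delta(\bar S)=\delta(S)>12+6m$, so Lemma~\ref{lem5} produces the unique negative curve, a double section $C$ with $|-K_{\bar S}+mF|=C+|bF|$ and $p_a(C)=m-b+1\le m+1$. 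Transporting $C$ back to $S$ by elementary transformations yields a double section whose normalization has genus $\le p_a(C)\le m+1$.

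The main obstacle is precisely the reduction in ($\Rightarrow$) of a \emph{general} witness $S'$ to a conic bundle model \emph{compatible with} $\pi$. The delicate point is that pseudo-effectivity of $-K+M$ is fragile: it is destroyed by blow-ups—pulling back always adds effective exceptional divisors to $-K+M$—and is preserved only under birational contractions, i.e.\ under pushforward. Hence the program must be run strictly in the contracting direction, and one must guarantee its output is the conic bundle over $\p^1$ rather than a competing Mori fiber space (such as a del Pezzo structure, which may exist because $S$ is geometrically rational). I expect to settle this by running the program \emph{relative to} the fibration defined by the moving part of $M'$, so that the resulting Mori fiber space is forced to be a conic bundle over $\p^1$ birational to $\pi$; this compatibility then makes $\delta$ and the double-section genera agree across all the models involved.
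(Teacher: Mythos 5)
Your ($\Leftarrow$) argument is the paper's own: elementary transformations centered at the singular points of $C$ (which, by minimality, automatically lie on smooth fibers---no choice is involved), followed by Lemma~\ref{subc.lem}. The genuine gap is in ($\Rightarrow$), at exactly the spot you single out as the main obstacle, and your proposed remedy does not work. You cannot run an MMP ``relative to the fibration defined by the moving part of $M'$'' because on the witness $S'$ that fibration is only a rational map, and a relative MMP requires an honest morphism to the base; to produce such a morphism you must blow up to resolve the indeterminacy, and---as you yourself point out---blowing up destroys pseudo-effectivity of $-K+M$. So the device you introduce to protect the fragile hypothesis presupposes the very step that ruins it: the argument is circular. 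After this, nothing in your proof excludes the scenario you worry about, namely that the only MMP actually available (the absolute one, which proceeds by contractions and therefore does preserve pseudo-effectivity under pushforward) terminates in a Mori fiber space unrelated to $\pi$: a del Pezzo surface of Picard number one over $k$, or a conic bundle with a different pencil.

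What closes the gap in the paper is not a relative MMP but birational rigidity: Theorem~\ref{sark.thm}, due to \cite{MR0220734} in the surface case. Since $\delta(S)>12+6m\geq 12$, the divisor $B_S+4K_{\p^1}$ is effective, and the theorem then says that \emph{every} Mori fiber space birational to $S$ is a conic bundle over $\p^1$ fitting into a commutative square with $\pi$; hence the output of the absolute MMP automatically has the same generic fiber as $S$, is linked to it by elementary transformations, and has the same $\delta$. Note that the hypothesis $\delta(S)>12+6m$ does double duty---it both feeds Lemma~\ref{lem5} and places you in the rigidity range $\delta\geq 8$---whereas you invoke it only for the former. The fact you do cite, that birational maps of conic bundles \emph{over} $\p^1$ are composites of elementary transformations (Claim~\ref{el.transf.say.2}), is strictly weaker than what is needed: the whole issue is to show that the witness can be taken to respect the fibration at all, and that is precisely Theorem~\ref{sark.thm}. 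Once that theorem replaces your relative MMP, the remainder of your ($\Rightarrow$) argument (pushforward of pseudo-effectivity along contractions, Lemma~\ref{lem5}, transporting $C$ back to $S$) runs as in the paper.
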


\begin{proof} Assume that $S\map S'$ is birational and $-K_{S'}+mF'$ is  pseudo-effective.
By \cite{MR0220734} (see also Theorem \ref{sark.thm}),
$S'$ is another  minimal conic bundle with the same $\delta$,
$S\to \p^1$ and $S'\to \p^1$ have the same generic fiber
 and 
 $-K_{S'}+m'F'$ is linearly equivalent to an irreducible double section $C'$ 
 for some $m'\leq m$ by Lemma \ref{lem5}.
Thus $C$ is obtained as the birational transform of $C'$.
Its   normalization has genus  $\leq m+1$   by Lemma \ref{subc.lem}.

Conversely, let $C\subset S$ be a double section. 
We can resolve the singularities of $C$ by   performing  elementary transformations at its singular points; see Paragraph \ref{el.transf.say}.
(Clearly, these points are on smooth fibers.) At the end we have
$C'\subset S'$ and $C'$ is smooth. If $p_a(C')\leq m+1$ then
$C'$ is a sub-curve of  $|-K_{S'}+mF'|$ by Lemma \ref{subc.lem}. \qed \end{proof}

\begin{lem} \label{subc.lem}
Let $C\subset S$ be a double section. Then
$C$ is a  sub-curve of  $|-K_{S}+mF|$ iff $p_a(C)\leq m+1$.
\end{lem}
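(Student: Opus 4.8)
The plan is to reduce the statement to a computation of one integer, namely the ``vertical shift'' of the class of $C$ relative to $-K_S$. First I would observe that ``$C$ is a sub-curve of $|-K_S+mF|$'' means precisely that the class $-K_S+mF-C$ is represented by an effective divisor: if $C+E\sim -K_S+mF$ with $E$ effective then $C+E$ is a member of the system containing $C$, and conversely. So the whole lemma is about deciding effectivity of $-K_S+mF-C$. The immediate numerical input is that $C$ is a double section, i.e. $(C\cdot F)=2$, and that $(K_S\cdot F)=-2$; hence $(-K_S+mF-C)\cdot F=2+0-2=0$. Thus the class in question has degree $0$ on every fiber, which is the clue that it should be vertical.

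The key step is to promote this numerical fact into an honest linear equivalence using minimality. Since $S$ is smooth and $\pi$ is minimal, Definition~\ref{cb.defn.1} gives $\pic(S)=\pi^*\pic(\p^1)\oplus\z[\omega_{S/\p^1}^{-1}]=\z F\oplus\z[\omega_{S/\p^1}^{-1}]$, and on this rank-$2$ group the pairing with $F$ kills exactly the multiples of $F$ (because $(F^2)=0$ while $(\omega_{S/\p^1}^{-1}\cdot F)=2\neq0$). Therefore $-K_S-C\sim cF$ for a \emph{unique} integer $c$; equivalently $C\sim -K_S+(-c)F$, and after renaming I may simply write $C\sim -K_S+cF$ for some $c\in\z$. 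This is the place where the hypothesis of minimality is genuinely used: without it $\pic(S)$ would contain the classes of components of reducible fibers, there would be other vertical classes, and $C$ would no longer be pinned down by its fiber degree. I expect this translation from ``numerically a multiple of $F$'' to ``linearly a multiple of $F$'' to be the only real content of the proof; everything else is bookkeeping.

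With the class fixed I would compute $p_a(C)$ by adjunction on the smooth surface $S$. Since $C+K_S\sim cF$, we get $2p_a(C)-2=(C\cdot(C+K_S))=(C\cdot cF)=c\,(C\cdot F)=2c$, so that $c=p_a(C)-1$. This identity is valid for any effective double section, whether or not $C$ is reduced or irreducible, because adjunction here is a statement about the divisor class.

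Finally I would close the equivalence. From $C\sim -K_S+cF$ we have $-K_S+mF-C\sim (m-c)F$, and a divisor $(m-c)F$ pulled back from $\p^1$ is effective precisely when $m-c\geq 0$, i.e. $c\leq m$. Substituting $c=p_a(C)-1$ turns this into $p_a(C)\leq m+1$. Since effectivity of $-K_S+mF-C$ was shown at the outset to be equivalent to $C$ being a sub-curve of $|-K_S+mF|$, this completes both directions at once. I anticipate no obstacle beyond making sure the sign conventions and the role of minimality are stated cleanly.
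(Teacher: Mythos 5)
Your proof is correct and is essentially the paper's own argument: the paper likewise uses minimality to write $C\sim -K_S+bF$, computes $b=p_a(C)-1$ by adjunction ($2p_a(C)-2=C\cdot(C+K_S)=2b$), and concludes that $b\leq m$ iff $p_a(C)\leq m+1$. The only difference is that you spell out the steps the paper leaves implicit (why minimality pins down the class, and why $(m-b)F$ is effective exactly when $m\geq b$), which is exactly the right bookkeeping.
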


\begin{proof} $C\sim -K_S+bF$ for some $b\in \z$. Thus
$2p_a(C)-2=C(C+K_S)=b(C\cdot F)=2b$ hence $b\leq m$ iff $p_a(C)\leq m+1$.\qed \end{proof}

\subsection*{Threefold conic bundles}

 We use  Corollary \ref{subc.lem.1}    to prove a similar result for 3-dimensional conic bundles.

\begin{thm} \label{main.surf.cor2} Let  $\pi:X\to \p^2$ be a 
 smooth, minimal conic bundle over a field of characteristic  $\neq 2,3,5$ 
whose  branch curve $B_X\subset \p^2$
has degree  $\geq 19$.  The following are equivalent.
\begin{itemize}
\item[\refstepcounter{equation} \label{main.surf.cor2.1} 1.]    $-K_X$ is  birationally pseudo-effective. 
\item[\refstepcounter{equation} \label{main.surf.cor2.2} 2.] There is a  generically finite  double section  $D\subset X$ with normalization $\tau:\bar D\to D$  such that the branch curve $B_D$ of $\pi\circ\tau:\bar D\to\p^2$ has degree $\leq 6$.
\item[\refstepcounter{equation} \label{main.surf.cor2.3} 3.]  $X$ is birational to a  smooth, minimal conic bundle $\pi':X'\to \p^2$
 (with the same branch curve) such that $|-K_{X'}|\neq \emptyset$. 
\end{itemize}
\end{thm}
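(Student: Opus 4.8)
The plan is to prove the cycle $(3)\Rightarrow(1)\Rightarrow(2)\Rightarrow(3)$, using restriction to a general line $\ell\subset\p^2$ to transfer the whole problem to the surface conic bundle $S_\ell=\pi^{-1}(\ell)\to\ell$, where Lemma \ref{lem5} and Corollary \ref{subc.lem.1} apply with $m=1$ (note $\delta(S_\ell)=\deg B_X\geq 19>12+6$). The implication $(3)\Rightarrow(1)$ is immediate: a nonempty $|-K_{X'}|$ gives an effective, hence pseudo-effective, representative of $-K_{X'}$, and $X'\simb X$, so $-K_X$ is birationally pseudo-effective by definition.

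For $(1)\Rightarrow(2)$: since $X$ is a conic bundle over $\p^2$ it is rationally connected, so case (a) of Claim \ref{pot.eff.defn.1} is excluded (it would force $K_X$ effective up to torsion) and (b) produces a Mori fiber space birational to $X$ with pseudo-effective anticanonical class. Because $\deg B_X\geq 19$ is large, the birational rigidity of conic bundles (\cite{Sar81} and Theorem \ref{sark.thm.c}) keeps us inside smooth minimal conic bundles over $\p^2$ with the same branch curve; I may thus fix a model $\pi':X'\to\p^2$ that is a smooth minimal conic bundle with $B_{X'}=B_X$ and $-K_{X'}$ pseudo-effective. Restricting to $S'_\ell$ for general $\ell$, and using $-K_{X'}|_{S'_\ell}=-K_{S'_\ell}+F'$ (adjunction, since $S'_\ell=\pi'^*\o_{\p^2}(1)$ restricts to the fiber class), the restriction stays pseudo-effective, so Lemma \ref{lem5} yields a unique negative double section $C'_\ell$ with $p_a(C'_\ell)\leq 2$. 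The uniqueness lets these assemble into a single generically finite double section $D'\subset X'$ with $D'|_{S'_\ell}=C'_\ell$, and Riemann--Hurwitz on the normalized general fiber $\bar D'_\ell\to\ell$ turns $p_a(C'_\ell)\leq 2$ into $\deg B_{D'}\leq 6$ for $\bar D'\to\p^2$. Since the geometric genus of the general line-section is a birational invariant over $\p^2$, the transform $D\subset X$ of $D'$ is a generically finite double section with $\deg B_D\leq 6$, which is (2).

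For $(2)\Rightarrow(3)$: starting from $D\subset X$ with $\deg B_D\leq 6$, the general section $C_\ell=D|_{S_\ell}$ is a double section whose normalization has genus $\leq 2$, so by the converse direction of Corollary \ref{subc.lem.1} its singularities lie on smooth fibers and can be removed by elementary transformations (Paragraph \ref{el.transf.say}). Carrying these out in a family over $\p^2$ produces a smooth minimal conic bundle $\pi':X'\to\p^2$ with branch curve $B_X$ in which $D$ becomes a double section $D'$ meeting each fiber in a reduced length-$2$ scheme. Minimality gives $\pic X'=\pi'^*\pic\p^2\oplus\z[\omega_{X'/\p^2}^{-1}]$, so $D'\simq -K_{X'}+\pi'^*\o_{\p^2}(c)$ for some integer $c$; restricting to a general $S'_\ell$ gives $D'|_{S'_\ell}\simq -K_{S'_\ell}+(1+c)F'$, while Lemma \ref{subc.lem} gives $C'_\ell\simq -K_{S'_\ell}+(p_a(C'_\ell)-1)F'$ with $p_a(C'_\ell)\leq 2$. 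Comparing forces $1+c\leq 1$, i.e. $c\leq 0$, so $-K_{X'}\simq D'+\pi'^*\o_{\p^2}(-c)$ is the sum of $D'$ and an effective pullback; hence $|-K_{X'}|\neq\emptyset$, which is (3).

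The main obstacle is the family version of the elementary-transformation construction in $(2)\Rightarrow(3)$, together with the dual assembly of the $C'_\ell$ into $D'$ in $(1)\Rightarrow(2)$. One must perform elementary transformations of the threefold conic bundle along the curves in $\p^2$ over which the $C_\ell$ acquire singularities, check that these centers lie in the smooth locus and meet $B_X$ only in controlled ways so that the outcome remains a smooth minimal conic bundle with branch curve unchanged, and track precisely how the branch sextic $B_D$ of the double cover intersects $B_X$. This intersection analysis is exactly what makes the threshold $\deg B_D\leq 6$, and the resulting inequality $c\leq 0$, sharp; once the surface results of Lemma \ref{lem5} and Corollary \ref{subc.lem.1} are in hand, the remaining steps are essentially formal.
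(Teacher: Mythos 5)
Your skeleton is exactly the paper's: the cycle $(3)\Rightarrow(1)\Rightarrow(2)\Rightarrow(3)$, Sarkisov rigidity (Theorem \ref{sark.thm} and Complement \ref{sark.thm.c}) to fix a conic-bundle model over $\p^2$ with the same branch curve, restriction to a line so that Lemma \ref{lem5} applies with $m=1$, and elementary transformations plus a Picard-group computation for $(2)\Rightarrow(3)$. But the two steps you yourself label ``the main obstacle'' are genuine gaps, and they are precisely where the paper's supporting lemmas do the work. The first is your use of a \emph{general} line $\ell$ and the ``assembly'' of the curves $C'_\ell$ into a divisor $D'$. Pseudo-effectivity of $-K_{X'}$ is witnessed by a countable sequence of $\q$-effective divisors $\Delta'_t$, so restricting them to $\ell$ forces $\ell$ to avoid all their supports: a \emph{very general} condition, problematic over the countable or finite fields allowed by the hypothesis. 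Worse, fiberwise uniqueness of $C'_\ell$ does not by itself produce an algebraic surface $D'\subset X'$ with $D'|_{S'_\ell}=C'_\ell$; one would need a Chow- or Hilbert-scheme argument to see the $C'_\ell$ move in a single algebraic family. The paper avoids both problems by restricting to the \emph{generic} line $L$, i.e.\ base-changing to the function field of the dual plane: an irreducible divisor on $X'$ restricts to an irreducible curve on $S'=X'|_L$, so the negative curve produced by Lemma \ref{lem5} inside $\Delta'_t|_{S'}$ is literally the restriction of an irreducible component $D'$ of $\Delta'_t$, and that $D'$ is the desired double section --- no assembly is needed.

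The second gap is in $(2)\Rightarrow(3)$: the existence and good behavior of the elementary transformations is the content of the paper's Lemma \ref{d.sec.elem.lem}, which you flag but do not prove. The paper's argument is short but essential: if $C\subset\sing D$ is a horizontal curve, $p\in C$, and $F'$ is a component of the fiber $F$ through $p$ with $F'\not\subset D$, then $(D\cdot F')_p\geq 2$, while $(D\cdot F)=2$ and a reducible fiber of a minimal conic bundle would split this degree evenly between its two components; hence $F$ is irreducible and $D\cap F=\{p\}$, so $\pi|_C:C\to\pi(C)$ is birational and $C$ is a legitimate center for an elementary transformation. Claim \ref{el.transf.say.1} then guarantees that each transformation yields again a smooth minimal conic bundle with the same branch curve, and each step strictly decreases the degree of the branch curve of $D$, so the process terminates. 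Without this, your requirement to ``check that these centers lie in the smooth locus and meet $B_X$ only in controlled ways'' is a restatement of the problem, not a solution. (Your concluding computation forcing $c\leq 0$ is fine; it is the paper's Lemma \ref{subc.gen.lem} carried out by restriction to a line instead of by Stein factorization and Hurwitz.)
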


\begin{proof} Assume that $\phi:X\map X'$ shows that  $-K_X$ is birationally pseudo-effective. By Complement \ref{sark.thm.c}, we may choose $X'$ such that 
 $\pi':X'\to \p^2$ is also a conic bundle with the same branch curve.
Write  $-K_{X'}\simq \Delta'$ where $\Delta'$ is pseudo-effective.

Let $L\subset \p^2$ be the generic line. By restriction
we get $\pi'_S:S'\to L$.
Since  $-K_{S'}+F=-K_{X'}|_{S'}$, we see that
$-K_{S'}+F\simq \Delta'|_{S'}$ is   pseudo-effective. 
Moreover, if $\Delta'_t$ is a sequence of $\q$-effective divisors converging to
$\Delta'$ then  $\Delta'_t|_{S'}$ is  a sequence of $\q$-effective divisors converging to $\Delta'|_{S'}$.

Thus,  by Lemma \ref{lem5}, 
$ \Delta'_t|_{S'}$ has an irreducible component   $C'\subset S'$
with negative self-intersection and its  normalization
has genus $\leq 2$  by Lemma \ref{subc.lem}.  Since $L$ is the generic line, $C'$ 
is the restriction of an irreducible component $D'$ of  $\Delta'_t$.
Thus $D'$ is a  
double section  and its birational transform on $X$ is a double section
$D\subset X$. Furthermore,
 we know that
the preimage of $L$ in the 
 normalization $\tau:\bar D\to D$  has genus $\leq 2$.
Thus $L$ intersects the 
 branch curve of $\pi\circ\tau:\bar D\to\p^2$ in  $\leq 6$ points. 
This shows that  (1) $\Rightarrow$ (2).

 Assume  next that  $\pi$ has a double section  $D\subset X$ as in (2).
By  Lemma \ref{d.sec.elem.lem}, there is  a sequence of elementary transformations
 $X\map X'$ such that the branch curve of $\pi': D'\to\p^2$ has degree $\leq 6$. By Lemma \ref{subc.gen.lem} this implies that 
$D'$ is a  sub-divisor of  $|-K_{X'}|$.  Thus (2) $\Rightarrow$ (3)
and (3) $\Rightarrow$ (1) holds by definition. \qed \end{proof}

\begin{lem} \label{d.sec.elem.lem}
 Let $\pi:X\to Z$ be a 
 conic bundle. Assume that
$X$ and $Z$ are smooth 
and $\dim Z= 2$. Let $D\subset X$ be a generically finite double section. 
Then there is a sequence of elementary transformations
$X\map X'$ such that the resulting $D'\subset X'$ is normal, except possibly along finitely many fibers.
\end{lem}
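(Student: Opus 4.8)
The plan is to read off the non-normal locus of $D$ from the generically finite degree $2$ map $\pi|_D\colon D\to Z$, and then to strip off its horizontal part by elementary transformations of the conic bundle centered along the ramification curves. Since $D$ is a hypersurface in the smooth threefold $X$ it is Cohen--Macaulay, so by Serre's criterion it is normal precisely where it is regular in codimension one; as $\dim D=2$, this means $D$ fails to be normal exactly along the one-dimensional part of $\sing D$. I therefore want to arrange that every one-dimensional component of $\sing D$ is vertical, i.e.\ contracted by $\pi$. Let $R\subset Z$ be the discriminant (branch) divisor of $\pi|_D$, defined away from the finitely many points of $Z$ over which $\pi|_D$ has positive-dimensional fibres. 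Away from the finitely many singular fibres meeting it, $\pi|_D$ is \'etale over $Z\setminus R$, so there $D$ is a union of two smooth sheets and hence smooth; thus every horizontal component of $\sing D$ lies over a component of $R$.

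Next I would pin down which components of $R$ produce horizontal non-normality. Over the smooth locus of a reduced component of $R$, and away from $B_X$, the standard local model of a double cover in characteristic $\neq 2$ writes $D$ as $\{w^2=g\}$ with $g$ a local equation of $R$; this is smooth when the multiplicity $\ord_{\Gamma_0}R=1$ and is singular along the whole preimage of $\Gamma_0$ when $\ord_{\Gamma_0}R\ge 2$. So the horizontal part of $\sing D$ is supported exactly over the non-reduced components of $R$. Moreover these avoid $B_X$: over a general point of $B_X$ the fibre is a pair of distinct lines and the double section meets it in two distinct smooth points, so $\pi|_D$ is \'etale there and $R$ shares no component with $B_X$. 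Consequently every non-reduced component $\Gamma_0\subset R$ satisfies $\Gamma_0\not\subset B_X$, so $\Gamma_0\cap B_X$ is finite and the fibres over a general point of $\Gamma_0$ are smooth conics.

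The main step is the elementary-transformation reduction. Fix a non-reduced component $\Gamma_0\subset R$, say with $\ord_{\Gamma_0}R=m\ge 2$. Over the generic point of $\Gamma_0$ the two points of $D\cap F$ coincide at one point of the smooth conic $F$, giving a rational double-root section of $\pi$ over $\Gamma_0$ lying in the smooth locus of the fibres; let $\sigma\subset D$ be the closure of its image. I would apply the elementary transformation of Paragraph \ref{el.transf.say} centered at $\sigma$, namely blow up $\sigma$ and contract the strict transform of $\pi^{-1}(\Gamma_0)$; in suitable local coordinates it is the map $(t,z)\mapsto(t/z_1,z)$, where $z_1$ is a local equation of $\Gamma_0$ and $t$ is a fibre coordinate with the double root at $t=0$. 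A direct substitution shows this map is an isomorphism over $Z\setminus\Gamma_0$ and replaces the local discriminant $g$ by $g/z_1^2$; hence it lowers $\ord_{\Gamma_0}R$ by exactly $2$ and leaves $R$ unchanged elsewhere. Since $R$ has only finitely many components, each of finite multiplicity, iterating produces $X\map X'$ after which the discriminant divisor is reduced; then $D'$ has no horizontal non-normal curve, and $\sing D'$ is one-dimensional only over the finite set consisting of $\sing R'$, the finitely many points where successive centers met $B_X$ or the section at infinity, and the finitely many points of $B_X$ over which $D'$ passes through a node. Thus $D'$ is normal except along finitely many fibres.

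The main obstacle is the bookkeeping over $B_X$, where the fibres are singular and the elementary transformations above are not available. I must be sure that no horizontal non-normal curve of $D$ lies over $B_X$; this rests on the transversality of a double section to a general singular fibre (two distinct smooth intersection points), which is exactly what forces $R$ and $B_X$ to share no component. The second delicate point is that the center $\sigma$ is only a section of $\pi$ over the generic point of $\Gamma_0$: it may run into the section at infinity or into a singular fibre over finitely many points of $\Gamma_0$, so the elementary transformation genuinely yields a conic bundle with the stated effect on $R$ only away from those finitely many fibres---which, fortunately, is all that the conclusion requires.
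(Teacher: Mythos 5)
Your construction is in substance the same as the paper's: your ``double-root section'' $\sigma$ over a non-reduced component $\Gamma_0$ of the discriminant is exactly the horizontal curve of $\sing D$ lying over $\Gamma_0$, which is precisely the center the paper uses for its elementary transformations, and your local formula (the transformation $(t,z)\mapsto(t/z_1,z)$ replaces the discriminant $g$ by $g/z_1^2$) is a correct and more explicit version of the paper's statement that each step decreases the degree of the branch curve of $D$. The Serre-criterion reduction is also fine. The genuine gap is the sentence asserting that over a general point of $B_X$ the double section ``meets the fibre in two distinct smooth points,'' from which you conclude that no (non-reduced) component of $R$ lies over $B_X$. No argument is given for this, and it is false as stated.

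Two things can go wrong. First, even on a minimal conic bundle, $D$ may pass through the node of every fibre over a component of $B_X$ (locally $D=X\cap\{x_0=0\}$ on $X=(x_0^2=x_1^2+z_1x_2^2)$), so $R$ certainly can share a component with $B_X$; the conclusion survives there only because such a $D$ cannot be singular at a node, which itself requires the intersection argument below. Second, and fatally for the argument as written: the lemma does not assume $X$ minimal, and on a non-minimal $X$ a double section can meet every fibre over a component $B_0\subset B_X$ in a single smooth point of one line, with multiplicity $2$. Concretely, on $X=(x_0^2=x_1^2+z_1x_2^2)\subset\p^2\times\a^2$, work in the chart $x_2=1$ and set $u=x_0+x_1$, $w=x_0-x_1$, so that $z_1=uw$; the closure $D$ of $\bigl((1+u)^2+u^2w^2=0\bigr)$ is a generically finite double section (over a general base point, substituting $w=z_1/u$ gives $(1+u)^2+z_1^2=0$, i.e.\ two points; over $\c$ it is the union of the two graphs $u=-1/(1\pm iw)$; over $\q$ it is irreducible), and it is singular along the horizontal curve $(u=-1,\,w=0)$, which lies over $B_X=(z_1=0)$. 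This breaks your method and, in fact, the unqualified statement: by Paragraph \ref{el.transf.say} every admissible elementary transformation is centered over a curve along which $\pi$ is generically smooth, hence is an isomorphism over a neighborhood of the generic point of $B_X$, so no sequence of elementary transformations can remove non-normality of $D$ sitting over $B_X$. What is needed---and what the paper supplies at the corresponding step---is the intersection computation: if $p$ lies on a horizontal curve $C\subset\sing D$ and $F'$ is a component of the fibre through $p$ with $F'\not\subset D$, then $(D\cdot F')_p\geq 2$ while $(D\cdot F)=2$; when $X$ is minimal (the only case in which the lemma is applied, in Theorem \ref{main.surf.cor2}), $D$ has degree $1$ on each component of a reducible fibre, so no such $p$ can lie on a reducible fibre, and since $D$ is generically finite this forces $\pi(C)\not\subset B_X$. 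You need to add this argument (and, strictly speaking, a minimality hypothesis) before your discriminant-reduction machine can be set running.
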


\begin{proof} Let $C\subset \sing D$ be a curve not contained in a fiber of $\pi$. Let $F'\subset F\subset X$ be an irreducible component of a fiber that is not contained in $D$ but meets $C$
at a point $p$.
Since $D$ is singular at $p$, the local intersection number $(D\cdot F')_p$  is at least 2.  Since $D$ is a double section, $(D\cdot F)\leq 2$. Thus
$F$ is irreducible and 
$D\cap F=\{p\}$, hence $\pi_C:C\to\pi(C)$ is birational.
As we discuss in Paragraph \ref{el.transf.say},
the elementary transformation centered at $C$ decreases the degree of the branch curve of $D\to \p^2$. After finitely many such steps 
we get $\pi':X'\to Z$ such that $D'\subset X'$ is normal, except possibly along finitely many fibers.\qed \end{proof}

\begin{lem} \label{subc.gen.lem}
Let  $\pi:X\to \p^2$ be a 
 minimal conic bundle and $D\subset X$ a  generically finite double section. Then
$D$ is a  sub-divisor of  $|-K_{X}|$ iff the branch curve $B_D\subset \p^2$ of $\pi_D: D\to\p^2$ has degree $\leq 6$.
\end{lem}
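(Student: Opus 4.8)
The plan is to imitate the surface computation of Lemma~\ref{subc.lem}, with the degree of the branch curve playing the role that the arithmetic genus played there. Since $\pi$ is minimal and $X$ is regular, Definition~\ref{cb.defn.1} gives $\pic(X)=\z\cdot\pi^*H\oplus\z\cdot\omega_{X/\p^2}^{-1}$, where $H$ is the hyperplane class on $\p^2$. Writing $[D]=a\,\omega_{X/\p^2}^{-1}+b\,\pi^*H$ and intersecting with a general fibre $F$ (a smooth conic, so $\omega_{X/\p^2}^{-1}|_F\cong\o_F(2)$ and $\pi^*H|_F\cong\o_F$) forces $2=\deg(\o_X(D)|_F)=2a$, hence $a=1$. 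Thus $D\sim\omega_{X/\p^2}^{-1}+b\,\pi^*H$ for a unique $b\in\z$.

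Next I would convert the statement ``$D$ is a sub-divisor of $|-K_X|$'' into the numerical condition $b\le 3$. Since $\omega_{\p^2}\cong\o(-3H)$ we have $-K_X\sim\omega_{X/\p^2}^{-1}+3\,\pi^*H$, so $-K_X-D\sim(3-b)\,\pi^*H$. Because $\pi_*\o_X=\o_{\p^2}$, this class is effective if and only if $3-b\ge 0$; hence $D$ is a sub-divisor of $|-K_X|$ if and only if $b\le 3$.

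It then remains to show $\deg B_D=2b$. For this I restrict to a general line $L\subset\p^2$: then $S:=\pi^{-1}(L)$ is a smooth minimal conic bundle over $L\cong\p^1$, and $C:=D|_S$ is a double section of $S\to L$. Using $\pi^*H|_S\sim F$ and $-K_X|_S=-K_S+F$, restriction of the class computed above gives $C\sim -K_S+(b-2)F$, so adjunction yields $2p_a(C)-2=C\cdot(C+K_S)=2(b-2)$, that is $p_a(C)=b-1$, exactly as in Lemma~\ref{subc.lem}. On the other hand $\deg B_D$ is the number of branch points of the degree-$2$ map $C\to L$, and Riemann--Hurwitz for this double cover reads
\[
2p_a(C)-2 \;=\; 2\bigl(2\cdot 0-2\bigr)+\deg B_D,
\]
so $\deg B_D=2p_a(C)+2=2b$. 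Combining the three computations, $D$ is a sub-divisor of $|-K_X|$ iff $b\le 3$ iff $\deg B_D\le 6$.

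The step I expect to be the main obstacle is the last identification $\deg B_D=2p_a(C)+2$. Riemann--Hurwitz computes the \emph{geometric} genus of the normalized fibre $\bar C$, whereas adjunction on $S$ computes the \emph{arithmetic} genus $p_a(C)$; these agree only when $C$ is smooth. This is precisely why one first applies Lemma~\ref{d.sec.elem.lem} to arrange that $D$ is normal away from finitely many fibres, and why $B_D$ should be read as the branch curve of $\pi\circ\tau$ on the normalization. The real content beyond the formal Picard-group and adjunction bookkeeping is therefore to check that a general $L$ avoids the non-normal locus of $D$ (and meets $B_X$ and $B_D$ transversally), so that $C\to L$ is a genuine smooth double cover with exactly $\deg B_D$ branch points; were $D$ singular along a curve dominating $\p^2$ one would only get $\deg B_D=2g(\bar C)+2<2b$, breaking the equivalence.
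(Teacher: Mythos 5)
Your proposal is correct in substance, and its first two steps (the Picard--group decomposition $D\sim\omega_{X/\p^2}^{-1}+b\,\pi^*H$ and the translation of ``sub-divisor of $|-K_X|$'' into $b\le 3$ via $\pi_*\o_X=\o_{\p^2}$) coincide with what the paper does, or leaves implicit. Where you diverge is the degree computation: you restrict to a general line $L$ and run adjunction on the surface $S=\pi^{-1}(L)$ plus Riemann--Hurwitz for the curve $C=D|_S\to L$, whereas the paper never restricts to a line. It applies adjunction on $X$ itself to get $\omega_D\sim b'\pi_D^*H$ (in the paper's convention $D\sim -K_X+b'\pi^*H$, so $b'=b-3$), takes the Stein factorization $D\to D'\to\p^2$, pushes the canonical class forward to $D'$, and invokes the Hurwitz formula for the finite degree-$2$ cover, $K_{D'}\sim\pi_{D'}^*\bigl(K_{\p^2}+\tfrac12 B_D\bigr)$; comparing the two expressions for $K_{D'}$ gives the degree relation in one line. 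The trade-off is exactly the issue you flag at the end: the surface-level double-cover formula applies to the finite flat cover $D'\to\p^2$ with its scheme-theoretic branch (discriminant) divisor even when $D$ is non-normal --- horizontal curves along which $D$ fails to be normal simply appear in $B_D$ with even multiplicity, so the identity $\deg B_D=2b$ survives --- whereas your curve-level Riemann--Hurwitz computes the genus of $\bar C$ and therefore only closes when $C$ is smooth, i.e.\ when $D$ is normal away from finitely many fibres. Your caveat is a genuine and correct observation that the paper glosses over: under the ``branch curve of the normalization'' reading of $B_D$, the implication $\deg B_D\le 6\Rightarrow D$ is a sub-divisor of $|-K_X|$ can indeed fail for $D$ non-normal along a horizontal curve, and the statement is consistent with its use in Theorem \ref{main.surf.cor2} precisely because Lemma \ref{d.sec.elem.lem} is applied first --- which is also how you propose to resolve it. So your argument is a valid proof under that reading/reduction, while the paper's Stein-factorization route buys the lemma for arbitrary generically finite double sections at the cost of interpreting $B_D$ with multiplicities.
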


\begin{proof} Since $X$ is minimal, we know that $D\sim -K_X+b\pi^*H$ for some $b\in \z$ where $H$ is the  class of a line in $\p^2$. 
Thus $K_D\sim b\pi_D^*H$. 

Let $D\to D'\to \p^2$ denote the Stein factorization. Then 
$K_D\sim b\pi_D^*H$ pushes forward to $K_{D'}\sim b\pi_{D'}^*H$.
By the Hurwitz formula
$K_{D'}\sim \pi_{D'}^*\bigl(K_{\p^2}+\tfrac12 B_D\bigr)$.  Thus
$b\leq 0$ iff $\deg B_D\leq 6$. \qed \end{proof}

\subsection*{K3 surfaces on conic bundles}

In order to prove Theorem \ref{main.thm}, it remains to  show  that
(\ref{main.surf.cor2.2}) does not hold for suitable
branch curves $B\subset \p^2$. 
Example \ref{K3.exmp} shows that there are conic bundles with high degree smooth branch curve for which $|-K_X|$ does contain a K3 surface.
Thus we need to focus on more subtle properties of $B$.
We present 2 approaches. The first uses branch curves with many nodes;
we prove that  all the nodes have to lie on a sextic curve.
This leads to  quite explicit examples starting with $\deg B=12$; see Corollary  \ref{good.nodes.lem.cor}. 

Note also that, in a flat family of conic bundles, double sections are parametrized by countably many components of the Chow variety. Thus if we find
one conic bundle without  certain type of  double sections then the very general conic bundle also has no double sections of the same type.

The other approach proves that there has to be a 
sextic curve that is everywhere tangent to $B_X$.
This is easily seen to be a non-empty Zariski open condition for
$\deg B\geq 15$, though I could not write down explicit examples.
As pointed out by \cite[Sec.5]{MR651652}, the results of
\cite[Sec.3]{MR0321934} imply that every smooth plane curve of 
degree $\geq 3$ is the branch curve of a minimal conic bundle.

\begin{lem} \label{good.nodes.lem}
Let $\pi:X\to Z$ be a minimal, smooth conic bundle with branch curve
$B_X$. Let $D\subset X$ be a double section 
with normalization $\tau:\bar D\to D$ 
and $B_D\subset Z$ the branch curve of  $\pi|_D\circ \tau:\bar D\to Z$.
Then $\sing (B_X)\subset B_D$.
\end{lem}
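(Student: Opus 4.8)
The plan is to show that the singular points of $B_X$ are precisely the points over which the conic fibre degenerates to a \emph{double line}, and then to prove that a double section cannot pass through such a fibre without being ramified there. First I would locate $\sing(B_X)$. Writing the conic bundle locally as $\{v^TM(z)v=0\}$ for a symmetric $3\times 3$ matrix $M(z)$, one has $B_X=\{\det M=0\}$. At a point $z_0$ with $\rank M(z_0)=2$ the fibre is a pair of distinct lines meeting at a node, and differentiating $\det M$ through the adjugate (which has rank one and is $c\,ww^T$ for the kernel vector $w$) shows that $B_X$ is singular at $z_0$ exactly when $X$ is singular at that node; since $X$ is smooth this is impossible. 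Hence every $z_0\in\sing(B_X)$ has $\rank M(z_0)=1$, so $X_{z_0}$ is a double line $\ell$, and by Paragraph \ref{extend.say} (standard conic bundle theory) $X$ has near $z_0$ the normal form $\{w^2+sx^2+ty^2=0\}$ with $z_0=(s,t)=(0,0)$.

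The key step is a no-section statement: $\pi$ admits no local (formal or analytic) section through any point $p\in\ell$. Along $\ell$ one has $w=0$, and on a putative section the equation forces $w^2=-(sx^2+ty^2)$; evaluating the leading term at $p=[x_0:y_0:0]$ gives a function of $(s,t)$ whose linear part is $-\bigl(q_s(x_0,y_0)\,ds+q_t(x_0,y_0)\,dt\bigr)$, where $q_s,q_t$ are the two leading quadrics. Smoothness of $X$ along $\ell$ is exactly the statement that this pencil of quadrics is base point free, so the linear part is nonzero; therefore the right-hand side cannot equal the square of a function vanishing at $z_0$, and no section exists. This is where the double-line hypothesis is essential: over a smooth conic fibre local sections exist in abundance, and it is only the rank-one degeneration that obstructs them.

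To conclude I would argue by contradiction. Suppose $z_0\in\sing(B_X)$ but $z_0\notin B_D$. Then the degree-two map $\pi|_D\circ\tau:\bar D\to Z$ is \'etale over $z_0$, hence has a local section $\epsilon$ near $z_0$; composing with $\bar D\to D\hookrightarrow X$ produces a local section $\sigma$ of $\pi$ whose value $\sigma(z_0)$ lies in $D\cap X_{z_0}\subset\ell$. This contradicts the no-section statement, so $z_0\in B_D$, which gives $\sing(B_X)\subset B_D$.

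The hard part will be the first step: the clean identification of $\sing(B_X)$ with the locus of double-line fibres, together with the local normal form, since everything downstream is a short formal computation once that form is in hand. I would also take care that the passage from ``\'etale over $z_0$'' to an honest local section of $\pi$ meeting $\ell$ is carried out in the appropriate formal or \'etale topology, and that the normalization $\tau$ is handled so that the composite really is a section of $\pi$; this is routine but is the place where a careless argument could fail.
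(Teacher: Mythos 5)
Your proof is correct and takes essentially the same route as the paper: identify $\sing(B_X)$ with the double-line fibres using smoothness of $X$, pass to the local normal form $w^2=sx^2+ty^2$, and derive a contradiction from the local sections of $\pi$ that the two unramified branches of $\bar D\to Z$ would supply over a point of $\sing(B_X)\setminus B_D$. The only (harmless) difference is that the paper quotes its Claim \ref{extend.say.5}, which rules out solutions even in the quotient field of the local ring, whereas your leading-term argument excludes only formal sections through the fibre --- a weaker statement that is nevertheless all the lemma requires.
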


\begin{proof} If $p\notin B_D$ then both of the \'etale local branches of
$\bar D\to Z$ give  \'etale local sections of $\pi$. 
As we discuss in Claim \ref{extend.say.5}, there are no \'etale local
 sections over the nodes of $B_X$ for $X$ smooth. \qed \end{proof}

\begin{cor} \label{good.nodes.lem.cor} Let $X\subset \p^2_{\mathbf x}\times  \p^2_{\mathbf y}$ be given by an equation of bidegree $(d,2)$
$$
\tsum_i  g_i(x_0, x_1, x_2) y_i^2=0.
$$
Assume that the curves 
$B_i:=\bigl(g_i(x_0, x_1, x_2)=0\bigr)\subset  \p^2_{\mathbf x}$
are smooth and they intersect each other transversally in $3d^2$ distinct points.

Let $D\subset X$ be a double section 
and $B_D\subset  \p^2_{\mathbf x}$ the branch curve of  $\pi|_D\circ \tau:\bar D\to Z$.
Then  $\deg B_D\geq 2d$.
\end{cor}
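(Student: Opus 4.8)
The plan is to read off the branch curve and its nodes from the diagonal form, feed them into Lemma~\ref{good.nodes.lem}, and finish with a B\'ezout count on the three curves $B_0,B_1,B_2$. First I would determine $B_X$ explicitly. For fixed $\mathbf x$ the fiber is the diagonal conic $\sum_i g_i(\mathbf x)\,y_i^2=0$, whose discriminant is $g_0g_1g_2$, so it degenerates exactly along $B_X=B_0\cup B_1\cup B_2$, a reduced curve of degree $3d$. Under the transversality hypotheses each $B_i$ is smooth, the three curves meet pairwise transversally, and no three pass through a common point, so the singular locus is the set of $3d^2$ nodes
$$
\sing(B_X)=(B_0\cap B_1)\cup(B_1\cap B_2)\cup(B_0\cap B_2).
$$
I would also check that $X$ meets the hypotheses of Lemma~\ref{good.nodes.lem}: a local computation at the nodes (where the fiber is a double line) and along the smooth locus of $B_X$ shows that $X$ is smooth; and $X$ is minimal because the double cover $\tilde B_i\to B_i$ is cut out by $\sqrt{-g_jg_k|_{B_i}}$, and $-g_jg_k$ has simple zeros at the $2d^2$ transverse points of $B_i\cap(B_j\cup B_k)$, hence is not a square in $k(B_i)$. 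Lemma~\ref{good.nodes.lem} then gives $\sing(B_X)\subset B_D$, so all $3d^2$ nodes lie on $B_D$.

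Next comes the core dichotomy. Set $e:=\deg B_D$. For each index $i$ the smooth degree-$d$ curve $B_i$ carries the $2d^2$ nodes lying in $(B_i\cap B_j)\cup(B_i\cap B_k)$, where $\{i,j,k\}=\{0,1,2\}$; by the no-triple-point hypothesis these are $2d^2$ \emph{distinct} points, and all of them lie on $B_D$. If $B_i$ is not a component of $B_D$, then $B_i$ and $B_D$ share no common component, so B\'ezout bounds the number of their intersection points by $(\deg B_i)(\deg B_D)=de$; since they already share at least $2d^2$ points, $de\geq 2d^2$, that is $e\geq 2d$. The only way to escape this conclusion for every $i$ is that $B_0,B_1,B_2$ are all components of $B_D$; but then, being distinct irreducible curves of degree $d$, they force $e\geq 3d\geq 2d$. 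Either way $\deg B_D\geq 2d$.

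I expect the main obstacle to be verification rather than conceptual: confirming that the stated transversality genuinely forces $X$ to be a \emph{smooth minimal} conic bundle with reduced branch curve $B_0\cup B_1\cup B_2$ whose singular scheme is exactly the $3d^2$ transverse nodes, so that Lemma~\ref{good.nodes.lem} applies and $\sing(B_X)$ equals this node set. Once that normalization is in place, the B\'ezout step is immediate; the one point to watch is that the $2d^2$ intersections with a fixed $B_i$ are counted as distinct points (guaranteed by the no-triple-point assumption) rather than being over-counted.
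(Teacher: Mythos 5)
Your proposal is correct and follows essentially the same route as the paper: apply Lemma~\ref{good.nodes.lem} to place all $3d^2$ nodes of $B_X=B_0+B_1+B_2$ on $B_D$, then run the B\'ezout dichotomy (either some $B_i$ meets $B_D$ properly, forcing $\deg B_D\geq 2d$, or the $B_i$ are components of $B_D$, forcing the same bound). Your explicit verification of minimality via the irreducibility of $\tilde B_i\to B_i$ is a careful addition the paper leaves implicit, and the paper's endgame is marginally more economical (it only needs $B_0+B_1\subset B_D$, of degree $2d$, rather than all three components), but these are cosmetic differences.
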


\begin{proof} The assumptions imply that $X$ is smooth. 
The branch curve of $X$ is $B_X=B_0+B_1+B_2$. 
By Lemma \ref{good.nodes.lem}, $B_D$ passes through all the nodes of $B$.
In particular, $B_D$ intersects  $B_0$ in the $2d^2$ points
$B_0\cap (B_1\cup B_2)$. Thus $\deg B_D\geq 2d$ unless  $B_0\subset B_D$. 
We can repeat the argument for $B_1$ to get that 
$\deg B_D\geq 2d$ unless  $B_0+B_1\subset B_D$. The letter case  also 
implies that $\deg B_D\geq 2d$. \qed \end{proof}

\medskip
For conic bundles with smooth branch curves we have a less precise condition.
(The 2 results are closely related 
since  passing through a node guarantees that the 
intersection multiplicity is $\geq 2$.)

\begin{prop} \label{main.3.prop} Let  $\pi:X\to \p^2$ be a 
 smooth, minimal conic bundle 
with   branch locus $B_X\subset \p^2$.  
Assume that there is no reduced curve $C\subset \p^2$  of degree $\leq 6$
such that all  points of $C\cap B_X$
have intersection multiplicity $\geq 2$.
Then $-K_X$ is not effective.
\end{prop}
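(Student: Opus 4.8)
The plan is to argue by contradiction: assuming $-K_X$ is effective, I would produce a reduced plane curve of degree $\le 6$ that is everywhere tangent to $B_X$, contradicting the hypothesis. First, suppose $-K_X\sim D_0$ with $D_0$ effective. Since $-K_X\cdot F=2$ for a fibre $F$, the horizontal part of $D_0$ is a double section, and because $X$ is minimal there are no sections, so this horizontal part is an irreducible generically finite double section $D$ that is a sub-divisor of $|-K_X|$. Lemma \ref{subc.gen.lem} then forces the branch curve $B_D\subset\p^2$ of $\pi|_D\circ\tau\colon\bar D\to\p^2$ to satisfy $\deg B_D\le 6$. My goal is to show that $B_D$ is the forbidden curve, i.e. that every point of $B_D\cap B_X$ has intersection multiplicity $\ge 2$.

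The heart of the argument is a local tangency computation along $B_X$. At a general point of $B_X$ the smooth conic bundle has the standard local model $X\cong(xy=u)\subset\a^2_{x,y}\times\a^2_{u,v}$ with $\pi(x,y,u,v)=(u,v)$ and $B_X=(u=0)$; the fibre over $(u,v)$ degenerates to the two lines $(x=0)$ and $(y=0)$ exactly when $u=0$. Writing $x=t$, $y=u/t$ on a general fibre, a double section meets the fibre in the two roots of a quadratic
\[
q_{(u,v)}(t)=B\,t^2+A\,t+Cu,
\]
so that $B_D=(\Delta=0)$ with $\Delta=A^2-4BCu$. Restricting to the branch curve gives $\Delta|_{u=0}=A(0,v)^2$, a perfect square. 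Hence $B_D$ does not contain $B_X$, and at each point $(0,v_0)$ of $B_D\cap B_X$ — precisely the points where $D$ passes through the node of the degenerate fibre, so that $A(0,v_0)=0$ — the intersection multiplicity equals $\ord_{v_0}A(0,v)^2\ge 2$. Geometrically the square arises because the two local branches of $D$ over $B_X$ sit on the two distinct sheets of the double cover $\tilde B_X\to B_X$, so their collision is governed by $A^2$; this is the smooth-branch-curve analogue of the nodal statement in Lemma \ref{good.nodes.lem}.

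Taking $C=B_D$ — which is reduced because it is the branch curve of the double cover $\bar D\to\p^2$ with $\bar D$ normal — produces a reduced curve of degree $\le 6$ all of whose intersection points with $B_X$ have multiplicity $\ge 2$, contradicting the hypothesis and proving that $-K_X$ is not effective. The main obstacle is to upgrade the tangency from general points of $B_X$ to \emph{every} point of $B_D\cap B_X$: one must also control the points where $D$ becomes tangent to a component of a singular fibre, where $B_D$ is itself singular, or where $A$ vanishes to higher order, and check that in each such case the contact with $B_X$ remains of multiplicity $\ge 2$. Verifying that the perfect-square structure of $\Delta|_{B_X}$ persists uniformly, together with the reducedness of $B_D$, is the technical core; the degree bound $\deg B_D\le 6$ and the reduction to double sections are then immediate from the earlier lemmas.
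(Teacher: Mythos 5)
Your reduction is the same as the paper's: extract the unique horizontal component $D$ of an effective anticanonical divisor, note it is a double section because minimality excludes sections, and get $\deg B_D\le 6$ from Lemma \ref{subc.gen.lem}. Your perfect-square discriminant is also the correct computational shadow of the right mechanism. But as written the proof has two genuine gaps. First, the special shape of your quadratic, $q_{(u,v)}(t)=Bt^2+At+Cu$, with constant term divisible by $u$, is not automatic: it encodes precisely the claim that the two points of $D$ on a degenerate fibre lie one on each line, i.e.\ that the two branches of $D$ over $B_X$ sit on the two distinct sheets of $\tilde B_X\to B_X$. You assert this as a geometric fact, but it is exactly here that minimality of $X$ must be invoked: minimality forces $D\cdot L_1=D\cdot L_2=1$ for the two components $L_1,L_2$ of a degenerate fibre (equivalently, as in the paper, the normalization of $D\cap\pi^{-1}(B_X)$ factors with degree $1$ through the \'etale cover $\tilde B_X\to B_X$, which is irreducible by minimality). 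For a general quadratic $Bt^2+At+C_0$ with $C_0(0,v_0)\neq 0$ the restriction $\Delta|_{u=0}=A^2-4BC_0$ is not a perfect square, so without this input the computation collapses; your tangency step never uses minimality at all. (A smaller unaddressed point: $B_D$ is the branch curve of the normalization $\bar D\to \p^2$, not the discriminant locus $(\Delta=0)$; they can differ by components of even multiplicity along the image of the non-normal locus of $D$. Here the discrepancy is harmless precisely because it is even, but you identify the two without comment.)

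Second, you leave the passage from general points of $B_X$ to \emph{every} point of $B_D\cap B_X$ explicitly unproved, calling it ``the technical core.'' This is backwards: that passage costs nothing once the claim is argued contrapositively, as the paper does. A point of $B_D\cap B_X$ with intersection multiplicity $1$ is, by definition, a point where both curves are smooth and meet transversally --- the nicest possible local configuration --- so only that single configuration ever needs to be excluded; all the degenerate situations you worry about (singular $B_D$, tangential contact, higher-order vanishing of $A$) have multiplicity $\ge 2$ for free and require no analysis. The paper's proof runs exactly this way: if $p\in B_D\cap B_X$ were a smooth transversal intersection point, then near $p$ the normalization of $D$ is a smooth double cover branched along $B_D$, so its restriction over $B_X$ is a double cover of $B_X$ ramified at $p$; yet this restriction must factor through the \'etale cover $\tilde B_X\to B_X$, irreducible by minimality --- a contradiction. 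Once you supply the minimality input and reorganize your argument as a contradiction at a transversal point, your local model does yield a proof; as it stands, the key step is both unjustified and admittedly incomplete.
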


\begin{proof}  Assume to the contrary that  $-K_X\sim D$ where $D$ is an effective  $\z$-divisor. Since $X$ is minimal, there are no rational sections. 
Thus there is a unique irreducible component $D_0\subset D$ that is a double section of $\pi$. 
Let $\tau:\bar D_0\to D_0$ denote the normalization and 
let $B_D\subset \p^2$ be the branch curve of 
 $\tau\circ \pi|_{D_0}:\bar D_0\to \p^2$. 
Note that  $D_0\sim -K_X-mH$ for some $m\geq 0$ and $\deg B_D\leq 6$ by
Lemma \ref{subc.gen.lem}.

We claim that  all  points of $B_D\cap B_X$
have intersection multiplicity $\geq 2$. To see this, assume to the contrary that there is a
point  $p\in B_D\cap B_X$ where $B_D, B_X$ are smooth and intersect transversally. 
In a neighborhood of $p$ the surface  $D_0$ is a smooth double cover
ramified along $B_D$. 
Then $(\pi|_{D_0})^{-1}(B_X)$ is a smooth double cover of $B_X$
ramified at $p$. On the other hand, $(\pi|_{D_0})^{-1}(B_X)=D_0\cap \pi^{-1}(B_X)$
hence its normalization factors through the \'etale cover
$\tilde B_X\to B_X$.  This is a contradiction.
(This argument in fact shows that  these intersection multiplicities are even.)
\qed \end{proof}

\medskip
Next we check that the assumptions of Proposition \ref{main.3.prop}
hold for a general branch curve of degree $\geq 15$.

\begin{lem}\label{den.curve.ok} Let $B\subset \p^2$ be a general curve of degree $d$. 
Then there is no reduced curve $C\subset \p^2$  of degree $<\tfrac12 d-1$
such that all  points of $C\cap B$
have intersection multiplicity $\geq 2$.
\end{lem}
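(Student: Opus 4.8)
The plan is a dimension count on an incidence variety. Since the integers $e=\deg C$ with $e<\tfrac12 d-1$ form a finite set, it suffices to fix one such $e$ and show that the degree $d$ curves $B$ everywhere tangent to some reduced degree $e$ curve form a \emph{proper} closed subset of $|\o_{\p^2}(d)|$; the finite union of these bad loci is then still proper and closed, and its complement is the desired general locus. Writing $N_m=\dim|\o_{\p^2}(m)|=\tfrac{m(m+3)}{2}$ and viewing a degree $e$ curve as a point of $|\o_{\p^2}(e)|$, I would introduce
$$
\mathcal I=\bigl\{(C,B)\ :\ (C\cdot B)_p\ge 2\ \text{for every}\ p\in C\cap B\bigr\}\subset |\o_{\p^2}(e)|\times|\o_{\p^2}(d)|,
$$
with $C$ restricted to the open locus of reduced curves, and bound $\dim\mathcal I$ by projecting to the first factor and estimating the fibre $\mathcal I_C$ over a fixed $C$.

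First I would bound $\mathcal I_C=\{B:\ B\ \text{everywhere tangent to}\ C\}$. Discard the $B$ containing a component of $C$ (these form a subspace of dimension $N_{d-e}<N_d$) and, since $\operatorname{Sing}C$ is finite, restrict to the dense open part of each fibre on which $C\cap B$ lies in the smooth locus $C_{\mathrm{sm}}$; bounding this open part controls $\dim\mathcal I_C$. For such $B$ the intersection $C\cap B$ is a length $de$ subscheme of the smooth curve $C_{\mathrm{sm}}$, and the tangency condition forces it to be supported on at most $\tfrac12 de$ points, each carrying a length determined by the point alone; hence the divisors $B|_C$ vary in a family of dimension at most $\tfrac12 de$. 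The restriction map $B\mapsto B|_C$ has fibres $\{f_B+f_C\cdot h : \deg h=d-e\}$, linear of dimension $h^0(\o_{\p^2}(d-e))=N_{d-e}+1$. Therefore $\dim\mathcal I_C\le \tfrac12 de+N_{d-e}+1$.

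Combining with $\dim\{C\}\le N_e$ and the identity $N_e+N_{d-e}-N_d=e(e-d)$ gives
$$
\dim\mathcal I\ \le\ N_e+\tfrac12 de+N_{d-e}+1\ =\ N_d+e\bigl(e-\tfrac12 d\bigr)+1,
$$
which is $<N_d$ precisely when $e\bigl(\tfrac12 d-e\bigr)>1$. Since $e\ge 1$ and $\tfrac12 d-e>1$ by the hypothesis $e<\tfrac12 d-1$, this holds, so the projection $\mathcal I\to|\o_{\p^2}(d)|$ is not dominant; note this is exactly where the sharp threshold $\tfrac12 d-1$ enters, through the $+1$ coming from the restriction fibre. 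I expect the substance to be routine and the only care to lie in the bookkeeping: checking that the generic fibre really governs $\dim\mathcal I$ (so that the excluded loci $B\supseteq C$ and tangencies at $\operatorname{Sing}C$ are negligible), and confirming the support bound $\le\tfrac12 de$ uniformly for reducible or singular reduced $C$, where one argues componentwise and uses that a length $m$ tangency subscheme at a smooth point contributes no moduli. The genus of $C$ never enters, which is why the bound is so clean.
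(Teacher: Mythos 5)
Your proposal is correct and takes essentially the same route as the paper's own proof: for a fixed reduced $C$ of degree $e$ the everywhere-tangent restrictions $B|_C$ move in a family of dimension at most $\tfrac12 de$ (plus one for scaling), the restriction map has linear fibres of dimension $h^0(\o_{\p^2}(d-e))=N_{d-e}+1$, and adding the $N_e$ moduli of $C$ gives exactly the paper's bound and the same threshold $d>2e+2$. The two bookkeeping points you flag at the end --- pairs where $B$ contains a component of $C$, and tangency schemes supported at $\sing C$ --- are also the points glossed over in the paper's argument (which sends a section of $\o_{\p^2}(d)|_C$ to its zero \emph{set} and asserts $1$-dimensional fibres, something that is only immediate at smooth points of $C$), so your write-up is at the same level of rigor as the published proof.
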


\begin{proof}
Fix a reduced  curve $C$ and let $L$ be a line bundle of degree $m$. Let $W(L)\subset H^0(C, L)$ denote the subvariety consisting of sections without simple zeros.  Note that any $s\in W(L)$ has at most $\frac12 m$ zeros.
The map that sends a section of $L$ to its set of zeros has 1-dimensional fibers. Thus $\dim W(L)\leq \frac12 m+1$. 

Fix a reduced plane curve $C\subset \p^2$ of degree $c$. Let $W(C,d)$ denote the  set of degree $d$ curves $B$
such that all  points of $C\cap B$
have intersection multiplicity $\geq 2$. Applying the above estimate to
$L=\o_{\p^2}(d)|_C$ we get that
$\dim W(C,d)\leq \frac12 cd+\binom{d-c+2}{2}$.
Next let  $W(d)$ denote the  set of degree $d$ curves $B$
such that all  points of $C\cap B$
have intersection multiplicity $\geq 2$ for some curve $C$ of degree $c$.
Then
$$
\dim W(d)\leq \tfrac12 cd+\tbinom{d-c+2}{2}+\tbinom{c+2}{2}-1.
$$
After expanding the binomials we see that 
$$
\dim W(d)<\tbinom{d+2}{2}-1\qtq{for $d>2c+2$.}
$$
\qed \end{proof}
(Note that the bound  $\tfrac12 d-1$ is sharp for $d=4$ but
one should be able to prove slightly better bounds for larger values of $d$.)

\begin{exmp} \label{K3.exmp}
\rm  Let $Z$ be a smooth variety and $E$ a rank 3 vector bundle one $Z$.
Set $P:=\p_Z(E)$ with projection $\pi:P\to Z$ and 
note that $\omega_{P/Z}\cong \o_P(-3)\otimes \pi^*\det E$.
Let $L$ be a line bundle on $Z$ and $X\subset P$ the zero set of a section of $\o_P(2)\otimes \pi^*L$.
Then $\omega_X\cong \bigl(\o_P(-1)\otimes \pi^*(L\otimes \det E)\bigr)|_X$ and
$\pi_*\bigl(\omega_X^{-1}\bigr)\cong (\omega_Z\otimes L\otimes \det E)^{-1}\otimes E$.

Note that $H^0\bigl(P, \o_P(2)\otimes \pi^*L\bigr)=H^0\bigl(Z,  L\otimes \sym^2 E\bigr)$ and
$H^0\bigl(X, \o_X(-K_X)\bigr)=H^0\bigl(Z,  \bigl(L(K_Z)\otimes \det E\bigr)^{-1}\otimes E\bigr)$. 
We would like $X$ to be smooth, this suggests that 
$L\otimes \sym^2 E$ should be generated by global sections, 
hence $L$ should be positive. By contrast, the condition
$|-K_X|\neq \emptyset$ suggests that $L$ should be negative.
It seems that both of these can be satisfied  only if $E$ is rather unstable.

For example, take $Z=\p^2$, set $E=\o_{\p^2}(c)\oplus\o_{\p^2}(3)\oplus\o_{\p^2}$  where $c>3$ and $L=\o_{\p^2}$.
Then $L\otimes \sym^2 E$ is generated by global sections, so a general section gives a smooth conic bundle $X$. Furthermore, 
$$
\pi_*\bigl(\omega_X^{-1}\bigr)\cong \o_{\p^2}(3-3-c)\otimes E
\cong \o_{\p^2}\oplus\o_{\p^2}(3-c)\oplus\o_{\p^2}(-c)
$$
has a unique section. Thus $X\to \p^2$ is a conic bundle such that
$|-K_X|$ contains a unique K3 surface.
It is obtained by intersecting $X$ with   
the divisor in $P$ corresponding to the $\o_{\p^2}(3)\oplus\o_{\p^2}$ summand of $E$.
The branch curve of $X$ is given by the equation
$$
\det
\left(
\begin{array}{ccc}
g_{2c} & g_{c+3} & g_c\\
  g_{c+3} & g_6 &  g_3\\
 g_{c} & g_3 &  g_0
\end{array}
\right)\ =0,
$$
where  $g_i:=g_i(x,y,z)$ denotes a homogeneous polynomial of degree $i$.
(We can thus assume that $g_0=1$.) Eliminating $g_c, g_3$ in the last row
writes $g_0$ times the determinant, hence the equation of $B_X$, in the form 
$$
(g_6g_0-g_3^2)(g_{2c}g_0-g_c^2)-(g_{c+3}g_0-g_cg_3)^2=0.
$$
Thus the degree 6 curve  $g_6g_0-g_3^2=0$ intersects $B_X$ at the points where
$g_{c+3}g_0-g_cg_3=0$ and all intersection multiplicities are even;
as needed in Lemma \ref{main.3.prop}.
\end{exmp}

\subsection*{Examples of conic bundle surfaces}

Working backwards from Theorem \ref{main.thm} we see that there are
surface conic bundles  $S\to \p^1$  over $\c(s,t)$
such that $-K_S$ is not birationally $\q$-effective. 
We will exhibit similar examples over $\q$ and $\q_p$.

\begin{exmp} \label{22.exmp}
\rm Let $k$ be a field and $S\to \p^1$  a conic bundle over $k$ that becomes trivial after a quadratic extension  $k'\supset k$.
Then $S_{k'}$ has a section $C'$, thus $S$ has a conjugate pair of sections $C$.
The normalization of $C$ has $p_a=-1$.
Thus Corollary \ref{subc.lem.1}  implies that $S\to \p^1$ is birational to a conic bundle  $S'\to \p^1$
such that $-K_{S'}$ is effective. (Typically the base locus of
$|-K_S|$ consists of
the disjoint conjugate sections and the moving part of  2  fibers.)

There are such examples over $\q$, even with arbitrary branch locus. 
Let $g(s)\in \q[s]$ be a polynomial of degree $2d$ with simple roots only.
Choose a prime $p$ such that $\sqrt{p}$ is not contained in the splitting field of $g$. Let $S_g\to \p^1$ be the projective model of  the surface
$$
\bigl(g(s)z^2=x^2-py^2\bigr)\subset \p^1_{xyz}\times \a^1_s.
$$
Then $S_g\to \p^1$ is minimal and the singular fibers lie exactly over the roots of $g(s)$. 
 Thus there are 
 surface conic bundles  $S\to \p^1$  with  $\delta(S)$ arbitrarily large and 
branch locus $B_S$ in general position  for which $-K_S$ is 
birationally effective.

Note also that the above argument implies that 
 $-K_S$ is birationally effective 
for every conic bundle over $\r$ since every surface conic bundle over $\c$
has a section.
The situation over finite  fields is unclear to me.
\end{exmp}

The examples where $-K_S$ is not birationally $\q$-effective rest on the following observation.

\begin{lem}\label{gen.br.big.lem}
 Let $Z$ be a 2-dimensional regular scheme and
$\pi:X\to Z$ a conic bundle.  Let $C\subset Z$ be an irreducible 
 1-dimensional subscheme such that $X_C:=\pi^{-1}(C)\to C$ has no  rational sections.
 Let $W\subset C$ be the set of points 
$z\in C$ such that $X_z$ is a double line and $X$ is regular along $X_z$.
Let $D\subset X$ be a double section 
with normalization $\tau:\bar D\to D$ 
and $B_D\subset Z$ the branch locus of  $\pi|_D\circ \tau:\bar D\to Z$.

Then $C\not\subset B_D$ but $W\subset B_D$.
\end{lem}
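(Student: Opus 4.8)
The plan is to prove the two assertions separately: $C\not\subset B_D$ by localizing at the generic point $\eta_C$ of $C$, and $W\subset B_D$ by a local analysis at each closed point $z\in W$. In both cases the mechanism is the relation between étale-local sections of $\pi$ and ramification of the degree $2$ map $\bar D\to Z$, exactly as in the proof of Lemma~\ref{good.nodes.lem}. For the first assertion I would begin by noting that the hypothesis forces the generic fibre to be a smooth anisotropic conic: a rational section of $X_C\to C$ is the same as a $k(C)$-point of the generic fibre $X_{\eta_C}$, so $X_{\eta_C}(k(C))=\emptyset$. Since in characteristic $\neq 2$ every singular plane conic over a field carries a rational point (the node of a pair of lines, or the support of a rational double line), $X_{\eta_C}$ must be smooth; in particular $C\not\subset B_X$. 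The double section $D$ meets $X_{\eta_C}$ in a length $2$ subscheme $D_{\eta_C}$, and as $X_{\eta_C}$ has no rational point this subscheme cannot be supported at a rational point, so $D_{\eta_C}=\spec K'$ for a separable quadratic extension $K'/k(C)$.

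Now localize at $\eta_C$: set $R=\o_{Z,\eta_C}$, a DVR with residue field $k(C)$. Because $C\not\subset B_X$ the fibre $D_{\eta_C}$ is finite, so $D\to Z$ is finite near $\eta_C$ and $S:=\o_D\otimes_{\o_Z}R$ is a free $R$-order of rank $2$ in $k(D)$ with special fibre $S\otimes_R k(C)=K'$. Completing the square (here $\chr\neq 2$ is used) writes $S=R[x]/(x^2-u)$, and the special fibre $k(C)[x]/(x^2-\bar u)$ being a separable field forces $\bar u\neq 0$, i.e. $u\in R^\times$; hence $S$ is étale over $R$, so $S$ is already normal and coincides with the semilocal ring of $\bar D$ over $\eta_C$. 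Thus $\bar D\to Z$ is unramified over $\eta_C$, which is exactly the statement $C\not\subset B_D$.

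For $W\subset B_D$, fix $z\in W$ and argue by contradiction following the pattern of Lemma~\ref{good.nodes.lem}. Suppose $z\notin B_D$. As above $\bar D\to Z$ is finite near $z$ (the fibre $D_z$ has length $2$), and not being ramified over $z$ it is finite étale of degree $2$ there; over the strict henselization $\o_{Z,z}^{\mathrm{sh}}$ it therefore splits into two sections of $\pi$ through points of $X_z$. But no such section exists when $X_z$ is a double line and $X$ is regular: writing the local equation as $q=u_0^2+sA(u)+tB(u)+(\text{terms in }(s,t)^2)$ for a regular system of parameters $s,t$ at $z$, regularity of $X$ along $X_z=\{u_0=0\}$ means the binary quadratics $A(0,u_1,u_2)$ and $B(0,u_1,u_2)$ have no common zero, so at any point $[0:u_1^0:u_2^0]$ a section would give $u_0\in\mathfrak m_z^{\mathrm{sh}}$ with $u_0^2\equiv-(as+bt)\bmod\mathfrak m^2$ and $(a,b)\neq(0,0)$; this is impossible since the left side lies in $\mathfrak m^2$ while the nonzero linear form $as+bt$ does not. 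This is the double-line analogue of Claim~\ref{extend.say.5}, and the contradiction gives $z\in B_D$.

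The main obstacle is the first assertion: one must pass from the reduced separable fibre $D_{\eta_C}$ to the ramification behaviour of the \emph{normalized} cover $\bar D\to Z$ without knowing a priori that $D$ is normal in codimension one. The order-theoretic computation is what makes this clean, since for a rank $2$ order over a DVR the special fibre is reduced precisely when the order is étale; normality is then automatic once the fibre is a separable field, and no separate resolution of the singularities of $D$ along $C$ is needed.
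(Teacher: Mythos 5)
Your proof is correct, and its overall skeleton --- reducing both assertions to the presence or absence of local sections of $\pi$ --- is the same as the paper's, but the execution differs at both steps, so a comparison is worthwhile. For $C\not\subset B_D$ the paper argues by contraposition in one line: if $C\subset B_D$ then, since $\chr\neq 2$ and $\bar D\to Z$ has degree $2$, the unique point of $\bar D$ over $\eta_C$ has residue field $k(C)$, so its image is a $k(C)$-point of $X_{\eta_C}$, i.e.\ $\red\bigl(D\cap X_C\bigr)$ contains a rational section of $X_C\to C$, contradicting the hypothesis. Your direct route through the rank-$2$ order $S=R[x]/(x^2-u)$ over $R=\o_{Z,\eta_C}$ proves the same dichotomy (ramified over $\eta_C$ if and only if the fibre has a rational point) with more machinery; what it buys, as you note, is that normality of $S$ falls out of \'etaleness, so the relation between $D$ and $\bar D$ along $C$ never has to be examined --- though the paper's contrapositive sidesteps that equally well, since it only uses the image in $D$ of the point of $\bar D$ over $\eta_C$. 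For $W\subset B_D$ the paper simply invokes Claim~\ref{extend.say.5}: at a point of $W$ the branch curve has a node, so after a quadratic extension the formal normal form $x_0^2=z_1x_1^2+z_2x_2^2$ of Paragraph~\ref{extend.say} applies, and that equation has no solutions over the local ring; hence the two \'etale-local sections that $z\notin B_D$ would produce cannot exist (the same mechanism as in Lemma~\ref{good.nodes.lem}). You instead re-prove the non-existence of henselian sections directly from the regularity criterion ($A_0$, $B_0$ without common zero), which is a small genuine improvement: it handles every regular double-line point at once, without first establishing the nodal normal form.

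One soft spot you should repair: in part 2 you assert ``as above $\bar D\to Z$ is finite near $z$ (the fibre $D_z$ has length $2$)'', but the horizontality argument you used at $\eta_C$ does not transfer to a closed point --- an irreducible double section $D$ may perfectly well contain the reduced fibre line $\red X_z$, in which case $D_z$ is one-dimensional. This is harmless for the lemma, because in that case $\bar D\to Z$ is certainly not \'etale over $z$, hence $z\in B_D$ and your contradiction hypothesis $z\notin B_D$ already excludes it; but in your write-up finiteness should be deduced from the assumption $z\notin B_D$ (\'etaleness over $z$ together with properness of $\bar D\to Z$ gives finiteness near the fibre), rather than asserted unconditionally.
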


\begin{proof} If $C\subset B_D$ then 
 $\red\bigl(D\cap X_C\bigr)$
is  a rational section of  $X_C\to C$. This is contrary to our assumptions and  $W\subset B_D$ follows from 
 Claim \ref{extend.say.3}. \qed \end{proof}

\medskip

\begin{exmp} \label{over.Q.exmps.exmp}
\rm We will apply Lemma \ref{gen.br.big.lem} with $Z=\p^1_{\z_p}$ and $C=\p^1_{\f_p}$
for some prime $p\geq 3$.
Fix  a natural number $d$ and set
$P:=\p_Z\bigl(\o_Z\oplus\o_Z(d)\oplus\o_Z(d+1)\bigr)$. The conic bundle
$X\subset P$ will be given by a section of $\o_P(2)$. For simplicity we choose a section of $\o_Z+\o_Z(2d)\oplus\o_Z(2d+2)\subset \pi_*\o_P(2)$. 
Choosing an affine coordinate  $s$ on $\p^1$, one can give such an $X$ by an equation
$$
X:=\bigl(z^2=a(s)x^2+b(s)y^2\bigr)\qtq{where} \deg a(s)=2d, \deg b(s)=2d+2.
$$
We choose $a(s)$ and $b(s)$ as follows.
\begin{itemize}
\item[\refstepcounter{equation} \label{over.Q.exmps.exmp.1} 1.] $\bar a(s)$ has only simple zeros where $\bar{\ }$ denotes reduction mod $p$,
\item[\refstepcounter{equation} \label{over.Q.exmps.exmp.2} 2.] $b(s)=(s^2-1)a(s)+pc(s)$ for some $c(s)$,
\item[\refstepcounter{equation} \label{over.Q.exmps.exmp.3} 3.] $s^2-1, \bar a(s), \bar c(s)$ are pairwise relatively prime and
\item[\refstepcounter{equation} \label{over.Q.exmps.exmp.4} 4.] $\bar a(1)$ is not a square in $\f_p$.
\end{itemize}

\medskip

\Claim \label{over.Q.exmps.exmp.5} 
Let $\pi:X_{\q_p}\to \p^1_{\q_p}$
be the generic fiber of  the above $X\to Z$ and 
$D_{\q_p}\subset X_{\q_p}$  a double section with
normalization $\tau:\bar D_{\q_p}\to D_{\q_p}$. Then
$g(\bar D_{\q_p})\geq d-1$.

\medskip

\begin{proof} The closure of $D_{\q_p}$ gives a double section $D\subset X$.
Let $B_D\subset Z$ be the branch locus of  $\pi|_D\circ \tau:\bar D\to Z$.
We aim to apply  Lemma \ref{gen.br.big.lem}.

Let $\alpha$ be a root of $ a(s)$. Near  $\alpha$ the equation of
$X$ has the form
$$
z^2=(s-\alpha)u_1x^2+\bigl((s-\alpha)u_2+pu_3\bigr)y^2,
$$
where the $u_i$ are units. Thus the fiber 
$X_{\alpha}$ is a double line and $X$ is regular along $X_{\alpha}$ by
(\ref{extend.say.5}).
Over the point $(s=1)\in  \p^1_{\f_p}$ the fiber is $(z^2=\bar a(1)x^2)\subset \p^2_{\f_p}$.
Since $\bar a(1)$ is not a square, its only $\f_p$-point is
$(0{:}1{:}0)$ where $\bar X$ is smooth. Hence   $\bar X \to \p^1_{\f_p}$ has no
sections. 
Thus Lemma \ref{gen.br.big.lem} implies that
$$
2g(\bar D_{\q_p})+2=\deg B_D\geq \deg a(s)=2d.
$$
\qed \end{proof}
\end{exmp}

The choices (\ref{over.Q.exmps.exmp.1}--\ref{over.Q.exmps.exmp.4}) can be satisfied for
$a(s), b(s)\in \q[s]$, hence we proved the following
more precise form of Theorem \ref{main.surf.thm}.

\begin{cor} For every $g$ there are conic bundles
$S\to \p^1$ defined over $\q$ with  $\delta(S)=4g+6$ and such that every double section of $S$ has geometric genus $\geq g$. 

For $g=2$ this gives  conic bundles
$S\to \p^1$ with $\delta(S)=14$ defined over $\q$ such that
$-K_S$ is not birationally pseudo-effective. \qed
\end{cor}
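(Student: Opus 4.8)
The plan is to specialize the construction of Example~\ref{over.Q.exmps.exmp} to the surface case over $\q$ and then read off both assertions from Claim~\ref{over.Q.exmps.exmp.5} together with Corollary~\ref{subc.lem.1}. First I would set $d:=g+1$ and choose polynomials $a(s),b(s)\in\q[s]$ with $\deg a=2d$ and $\deg b=2d+2$ satisfying conditions (\ref{over.Q.exmps.exmp.1}--\ref{over.Q.exmps.exmp.4}) relative to a fixed prime $p\geq 3$; these are finitely many Zariski-open and congruence conditions, so they can be met simultaneously by polynomials with rational coefficients. Let $S:=\bigl(z^2=a(s)x^2+b(s)y^2\bigr)$ be the resulting surface conic bundle over $\q$, realized inside $\p_{\p^1}\bigl(\o\oplus\o(d)\oplus\o(d+1)\bigr)$.

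Next I would compute the branch degree. The fibers degenerate exactly along the zeros of the discriminant $a(s)b(s)$; since $a$ and $b$ have only simple roots and are coprime, these are $\deg a+\deg b=4d+2$ distinct points, and the choice of twists leaves the fiber at infinity smooth. Hence $\delta(S)=4d+2=4g+6$. Minimality is verified as in Example~\ref{22.exmp}: condition (\ref{over.Q.exmps.exmp.4}) forces the special fiber over $(s=1)\in\p^1_{\f_p}$ to have no $\f_p$-point of smoothness, so that the associated double cover $\tilde B_S\to B_S$ has no rational section and is irreducible over each component.

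For the genus bound I would take an arbitrary double section $D\subset S$ defined over $\q$ and base change to $\q_p$. Its closure in the integral model $X\to\p^1_{\z_p}$ of Example~\ref{over.Q.exmps.exmp} is a double section to which Claim~\ref{over.Q.exmps.exmp.5} applies directly, giving that the geometric genus of its normalization is $\geq d-1=g$. Since geometric genus is invariant under the field extension $\q\hookrightarrow\q_p$, the original $D$ already has geometric genus $\geq g$. This establishes the first assertion. For $g=2$ we then have $\delta(S)=14>12=12+6\cdot 0$, so Corollary~\ref{subc.lem.1} applies with $m=0$: were $-K_S$ birationally pseudo-effective, $S$ would carry a double section whose normalization has genus $\leq 1$, which is excluded by the bound just proved. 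Hence $-K_S$ is not birationally pseudo-effective.

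The step I expect to be the main obstacle is the base-change reduction, namely that a double section defined over $\q$ neither acquires extra components nor drops in geometric genus upon passing to $\q_p$ and reducing mod $p$. This is precisely where condition (\ref{over.Q.exmps.exmp.4}) does the work: it guarantees $\bar X\to\p^1_{\f_p}$ has no section, so the genus estimate of Claim~\ref{over.Q.exmps.exmp.5} already rules out double sections that split into conjugate sections (which would have normalization of genus $\leq 0$). Consequently no separate descent argument is required, and the invariance of geometric genus under algebraically closed field extensions closes the gap between the bound over $\q_p$ and the desired statement over $\q$.
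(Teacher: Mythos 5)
Your route is exactly the paper's own: the Corollary is stated there without a separate proof precisely because it is meant to be read off from Example~\ref{over.Q.exmps.exmp} with $d=g+1$, Claim~\ref{over.Q.exmps.exmp.5}, and Corollary~\ref{subc.lem.1}; moreover your added details (the computation $\delta(S)=4d+2=4g+6$ via the discriminant $a(s)b(s)$ and the smooth fiber at infinity, and the observation that a double section defined over $\q$ cannot split into a pair of sections after base change to $\q_p$, since $X_{\q_p}\to\p^1_{\q_p}$ admits no rational sections) are correct fillings of steps the paper leaves implicit.

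The genuine gap is your minimality step. Corollary~\ref{subc.lem.1} requires $S\to\p^1_{\q}$ to be minimal in the sense of Definition~\ref{cb.defn.1}, i.e.\ extremal \emph{and} without rational sections, and your argument only delivers the latter. Condition~(\ref{over.Q.exmps.exmp.4}) concerns the fiber of the reduction $\bar X\to \p^1_{\f_p}$ over $s=1$; it says nothing about irreducibility, over their residue fields, of the singular fibers of $S\to\p^1_{\q}$, and the inference ``no smooth $\f_p$-point, hence $\tilde B_S\to B_S$ is irreducible over each component'' is a non sequitur. In fact conditions (\ref{over.Q.exmps.exmp.1}--\ref{over.Q.exmps.exmp.4}) do \emph{not} imply extremality. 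Valuation arguments do handle almost all fibers: for a root $\alpha$ of $a$ one has $b(\alpha)=pc(\alpha)$, of odd $p$-adic valuation in the unramified field $\q_p(\alpha)$, hence a non-square; similarly $a(\beta)$ has valuation $1$ when $\beta$ is a root of $b$ reducing to a root of $\bar a$, and reduces to the non-square $\bar a(1)$ when $\beta$ reduces to $+1$. But nothing constrains the root $\beta_-$ of $b$ reducing to $-1$: one can choose $a,b$ satisfying all four conditions with $\beta_-=-1+p\in\q$ and $a(\beta_-)$ a rational square (take $\bar a(-1)$ a square, normalize $a$ so that $a(-1+p)$ is an exact square, and pick $c$ with $c(-1+p)=(2-p)a(-1+p)$), and then the fiber over $\beta_-$ splits over $\q$, so $S$ is not extremal and Corollary~\ref{subc.lem.1} cannot be quoted for $S$ as you do. The repair is cheap and should be made explicit: either impose in addition that $\bar a(-1)$ is a non-square in $\f_p$, which is again satisfiable over $\q$ and forces every singular fiber to be irreducible over its residue field; or contract the at most one split fiber to reach a minimal conic bundle $S^{\dagger}$ with $\delta(S^{\dagger})\geq 4g+5=13>12$ for $g=2$, apply Corollary~\ref{subc.lem.1} to $S^{\dagger}$, and note that double sections of $S^{\dagger}$ and of $S$ correspond under the birational map with the same geometric genus.
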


\subsection*{Birational maps  of conic bundles}

We summarize the results on birational maps  of conic bundles that we used.
As before, all residue characteristics are assumed to be $\neq 2$.

\begin{say}[Extending conic bundles]\label{extend.say}
\rm  We will need to understand the following 
\medskip

\problem 
\label{extend.say.1} 
Let $Z$ be a regular surface, $W\subset Z$ a finite subset and
$\pi^0:X^0\to Z^0:=Z\setminus W$ a conic bundle with branch locus
$B^0_X$. We would like to extend $\pi^0:X^0\to Z^0$ to a conic bundle
$\pi:X\to Z$ and control the singularities of $X$ in terms of $B_X$.
(This is also interesting if $\dim Z>2$ but the 2-dimensional case is simpler.)
\medskip

We may assume that $W=\{p\}$ is a single point and $Z$ is local. 
The push-forward  $E^0:=\pi^0_*\omega_{x^0/Z^0}^{-1}$ is a locally free sheaf of rank 3. Set $E:=j_*E^0$ where $j:Z^0\into Z$ is the natural injection.
Then $E$ is a reflexive sheaf but, since $Z$ is regular and 2-dimensional,
$E$ is locally free, hence free. Set  $P:=\p_Z(E)$ and let $X\subset P$ be the closure of $X^0\subset P^0$. Choose an isomorphism 
$P\cong \p^2_{Z}$, then
$X$ is given by an equation
\begin{equation}
\tsum_{ij} g_{ij}(z_1, z_2)x_ix_j=0,
\label{extend.say.2}
\end{equation}
where $g_{ij}(z_1, z_2)\in \o_Z$. The  scheme structure of the branch locus is defined by
\begin{equation}
\det \bigl(g_{ij}(z_1, z_2)\bigr)=0.
\label{extend.say.3}
\end{equation}
The worst case is when the central fiber $X_p$ equals $\p^2$; thus $\pi:X\to Z$ is not even equidimensional. If this happens then all the $g_{ij}$ vanish at $p$
so $\det \bigl(g_{ij}(z_1, z_2)\bigr)\in m_p^3$. Thus $B_X$ has a triple 
(or higher) point at $p$.
Otherwise the central fiber $X_p$ is conic. 

$X_p$ is smooth iff  $p\not\in B_X$.

Next assume that $X_p$ is a pair if lines. 
Possibly after a quadratic residue field extension, in suitable formal coordinates
we can diagonalize the equation of $X$ as
$$
x_0^2=x_1^2+f(z_1, z_2)x_2^2 \qtq{and} B_X=\bigl(f(z_1, z_2)=0\bigr)
$$
where $f(0,0)=0$. We see that $X$ is regular along $X_p$  iff
$f\not\in (z_1, z_2)^2$; that is, iff $B_X$ is regular at $p$.
Furthermore,   $(z_1, z_2)\mapsto (1{:}1{:}0; z_1,z_2)$ is a formal  section.

Finally consider the case when $X_p$ 
 is a double line. Then $\det \bigl(g_{ij}(z_1, z_2)\bigr)\in m_p^2$ and $B_X$ has a double (or higher) point
 at $p$.  If the branch curve  has a node then, after a quadratic residue field extension, in suitable formal coordinates
we can write $X$ as 
$$
X=\bigl(x_0^2=z_1x_1^2+z_2x_2^2\bigr).
$$
The fiber of the projection $\pi:X\to \a^2_{\mathbf z}$  is a 
double line over the origin, 
a pair of lines over the coordinate axes $(z_1z_2=0)$ and smooth otherwise.
In contrast with the previous nodal case,  the equation has no solutions in the quotient field of the completion of $\o_{z,Z}$; see  Claim \ref{extend.say.5}. 

We have thus proved the following.
\medskip

\Claim \label{extend.say.4}
 Let $Z$ be a regular surface whose  residue characteristics are  $\neq 2$, $B\subset Z$ a curve 
with only nodal singularities and $W\in Z$ a finite subset containing the nodes of $B$. Let
$\pi^0:X^0\to Z\setminus W$ be a conic bundle with (scheme theoretic) branch locus
$B\setminus W$.  
Then $X^0$ extends to a unique
 conic bundle
 $\pi:X\to Z$.

 Furthermore,  $X$ is regular iff the following holds:
for every $p\in W$ that is a node of $B$, the projection $\pi:X\to Z$ has no sections over the quotient field of the strict henselization ${\o}^{\rm sh}_{p,Z}$
of the  local ring
${\o}_{p,Z}$.
\qed
\medskip

We also used the following  well known result.
\medskip

\Claim \label{extend.say.5}
Let $\bigl(R,m=(s,t)\bigr)$ be a regular local ring of dimension 2.
Then $x_0^2=sx_1^2+tx_2^2$ has no nonzero solutions in the quotient field of $R$.
\medskip

\begin{proof} After clearing denominators, we may assume that
the $x_i$ are in $R$. Let $c$ be the largest such that the $x_i$ are in $m^c$.
We can thus write  $x_i=p_i(s, t)+r_i$ where $p_i$ is a 
homogeneous polynomial of degree $c$ and  $r_i\in m^{c+1}$.
Then $x_0^2\in m^{2c}$ but $sx_1^2+tx_2^2\in m^{2c+1}$. Thus
in fact  $p_0\equiv 0$  and so  $sp_1^2+tp_2^2\in m^{2c+2}$.
Thus $sp_1^2+tp_2^2\equiv 0$ hence  $p_1$ and $p_2$ are both identically 0. 
This is a contradiction.\qed 
\end{proof}
\end{say}

\begin{say}[Elementary transformations of conic bundles]\label{el.transf.say}
\rm Let $\pi:X\to Z$ be a 
 conic bundle over a regular surface and  $C\subset Z$ a regular curve such that
$\pi$ is smooth over  $C$
and $s:C\to X_C$ a section of
$X_C\to C$. We first blow up $s(C)\subset X$ and then contract the birational transform of $X_C$. We get another conic bundle  $\pi^{(s)}:X^{(s)}\to Z$.
The rational map  $X\map X^{(s)}$ is called the {\it elementary transformation} with center $s(C)$.

Next consider the case when $C\subset Z$ is a geometrically reduced curve, 
$\pi$ is smooth over the generic points of $C$ and 
  $s:C\map X_C$ is a rational section of
$X_C\to C$.  Then there is a finite subset $W\subset Z$  such that  $Z^0:=Z\setminus W$, $C^0:= C\setminus W$ and $X^0:=X\setminus \pi^{-1}(W)$ satisfy the previous assumptions. We can thus construct the
elementary transformation $\pi^{0,(s)}:X^{0,(s)}\to Z^0$ of $\pi^0:X^0\to Z^0$ with center
$s(C^0)$. Finally, whenever possible,  we use the method of
Paragraph \ref{extend.say} to extend $\pi^{0,(s)}:X^{0,(s)}\to Z^0$
to a conic bundle $\pi^{(s)}:X^{(s)}\to Z$ called the  
 {\it elementary transformation} with center $s(C)$.
\medskip

\Claim \label{el.transf.say.1} 
Let $\pi:X\to Z$ be a 
 conic bundle  over a  surface, $C\subset Z$ a geometrically reduced curve and 
  $s:C\map X_C$ a rational section of  $X_C\to C$. Assume that
$X$ and $Z$ are regular and  $\pi$ is smooth over the generic points of $C$. 

Then the elementary transformation with center $s(C)$ exists and it is a 
conic bundle $\pi^{(s)}:X^{(s)}\to Z$ such that $X^{(s)} $ is also regular
and has the same branch curve as $X\to Z$. \qed
\medskip

The following result of \cite{Sar81}, whose idea goes back to \cite{MR0220734},  describes  birational transformations of conic bundles over the same base. 

\medskip

\Claim \label{el.transf.say.2} 
Let $\pi_i:X_i\to Z$ be smooth  
 conic bundles  over a  smooth surface. 
Let $\phi: X_1\map X_2$ be a  birational equivalence over $Z$. That is, the following diagram commutes
$$
\begin{array}{ccc}
X_1 & \stackrel{\phi}{\map} & X_2\\
\pi_1 \downarrow\hphantom{\pi_1} && \hphantom{\pi_2}\downarrow\pi_2\\
Z & = & Z.\\
\end{array}
$$
Then $\phi$ is a composite of elementary transformations. \qed
\end{say}

The key result about birational maps of  conic bundles is the following.
The surface case is due to  \cite{MR0220734}. (The minimal model program for surfaces over any field is established in \cite{Mori82}, thus the arguments of \cite{MR0220734} extend to any field.)
The much harder
3-fold case is treated in
 \cite{Sar81}. See \cite{MR1311348, MR1798984} for more conceptual proofs.

\begin{thm} \label{sark.thm}
Let $\pi:X\to Z$ be a minimal 
 conic bundle over a field of characteristic  $\neq 2,3,5$
 such that $X,Z$ are smooth and
$B_X+4K_Z$ is effective.   Let $\pi':X'\to Z'$ be a Mori fiber space
and $\phi:X'\map X$ a birational map.  Then $\pi'$ is also a conic bundle,
and there is a birational map $\phi_Z:Z'\map Z$ such that the following diagram commutes
\begin{equation}
\begin{array}{ccc}
X' & \stackrel{\phi}{\map} & X\\
\pi' \downarrow\hphantom{\pi'} && \hphantom{\pi}\downarrow\pi\\
Z' & \stackrel{\phi_Z}{\map} & Z.\\
\end{array}
\label{sark.thm.1}
\end{equation}
\end{thm}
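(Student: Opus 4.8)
The plan is to prove this as a birational rigidity statement for conic bundles via the Noether--Fano--Iskovskikh method, packaged as the Sarkisov program; the surface case is classical (Iskovskikh) and the threefold case is the substance of \cite{Sar81}, so I would set up the uniform mechanism and indicate exactly where the hypothesis that $B_X+4K_Z$ is effective enters. First I would fix a very ample system $\mathcal{H}'$ on $X'$ adapted to the Mori fiber space $\pi'$ (for instance a subsystem of $|-nK_{X'}+\pi'^*A'|$ with $A'$ ample on $Z'$) and let $\mathcal{H}$ be its birational transform on $X$. By minimality of $\pi$ we have $\pic(X)=\pi^*\pic(Z)\oplus\z[\omega_{X/Z}^{-1}]$ from Definition \ref{cb.defn.1}, so I may write $\mathcal{H}\equiv -\mu K_X+\pi^*N$ for a well-defined $\mu>0$ and a divisor class $N$ on $Z$. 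The pair $\bigl(X,\tfrac{1}{\mu}\mathcal{H}\bigr)$ is then the object to analyze.

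The heart of the argument is the dichotomy furnished by the Noether--Fano inequality: either $\phi$ is a square (fiberwise) birational equivalence, in which case it descends to a birational map $\phi_Z$ of the bases and \ref{el.transf.say.2} immediately yields the conclusion, or the pair $\bigl(X,\tfrac{1}{\mu}\mathcal{H}\bigr)$ is not canonical, so there is a maximal singularity, i.e.\ a divisor $E$ over $X$ with $a\bigl(E;X,\tfrac{1}{\mu}\mathcal{H}\bigr)<0$. I would then classify the center $\Gamma:=\cent_X(E)$ according to its image in $Z$. If $\pi(\Gamma)$ is a point or $\Gamma$ is ``vertical'' (swept out by fiber components), untwisting $E$ is realized by an elementary transformation in the sense of \ref{el.transf.say}, which by \ref{el.transf.say.1} produces another smooth conic bundle over $Z$ with the same branch curve while strictly lowering the Sarkisov degree. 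Thus the only way to escape the class of conic bundles over $Z$ would be a ``horizontal'' maximal singularity whose center dominates a curve $C\subset Z$.

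The key point, and the role of the hypothesis, is that a horizontal maximal singularity forces a quantitative incompatibility with the discriminant. Running the adjunction estimate along $C=\pi(\Gamma)$ and comparing multiplicities with the local equations \ref{extend.say.2}--\ref{extend.say.3} of the branch locus, one obtains an inequality that can be violated only when $B_X+4K_Z$ fails to be effective: intuitively $4K_Z+B_X\ge 0$ says the conic bundle is ``canonically positive enough'' that no horizontal maximal singularity can occur. On surfaces this is exactly Iskovskikh's condition $\delta(S)\ge 8$, and on $\p^2$ it is $\deg B_X\ge 12$. Hence every maximal singularity is vertical, every untwisting is an elementary transformation, and by termination of the Sarkisov program the whole of $\phi$ is assembled from elementary transformations together with a birational change of base. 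This simultaneously shows that $\pi':X'\to Z'$ is a conic bundle and produces the commuting square with $\phi_Z:Z'\map Z$.

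The main obstacle is precisely the classification of centers and the discriminant estimate in the threefold case: controlling the numerical behaviour of a maximal singularity whose center is a horizontal curve, and extracting from it the sharp threshold $4K_Z+B_X\ge 0$, is the hard analysis of Sarkisov, requiring the full machinery of the log minimal model program in dimension three (accounting for the characteristic restriction) and resolution of singularities. For this step I would rely on \cite{Sar81} and the more conceptual treatments in \cite{MR1311348, MR1798984} rather than reprove it, while the surface case reduces to the elementary transformation calculus of \ref{el.transf.say} together with the classical Noether inequality, which in particular rules out the Picard-rank-one del Pezzo alternative for $X'$.
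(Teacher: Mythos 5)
The paper offers no proof of this theorem at all: it is quoted as a known result, with the surface case attributed to Iskovskikh \cite{MR0220734} (plus \cite{Mori82}, which supplies the surface MMP over an arbitrary field), the threefold case to Sarkisov \cite{Sar81}, and \cite{MR1311348, MR1798984} cited for more conceptual proofs; the only proof the paper writes out in this circle of ideas is for Complement \ref{sark.thm.c}. Since your proposal ultimately delegates the hard analysis to exactly the same references, your bottom line coincides with the paper's.

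However, the sketch you wrap around that citation misrepresents the cited argument in two substantive ways. First, the untwisting dichotomy is inverted: an elementary transformation in the sense of Paragraph \ref{el.transf.say} is centered at a rational section $s(C)$ over a curve $C\subset Z$, i.e.\ at a curve of $X$ mapping \emph{birationally onto a curve of the base}. So the maximal singularities that elementary transformations untwist are precisely those whose centers dominate curves in $Z$ birationally, while centers lying over points, fiber components, and multisections over curves are the ones that must be excluded by Sarkisov's local analysis --- the opposite of the assignment you make. Second, your Noether--Fano dichotomy (``either $\phi$ is square or the pair is not canonical'') is incomplete, and the missing case is exactly where the hypothesis acts. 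The criterion for conic bundles is: if $\bigl(X,\tfrac1{\mu}\mathcal{H}\bigr)$ is canonical \emph{and} the base class $N$ (defined by $\mathcal{H}\equiv-\mu K_X+\pi^*N$) is pseudo-effective, then $\phi$ is square. Effectivity of $B_X+4K_Z$ is what supplies the second condition: since $\pi_*(K_X^2)=-(4K_Z+B_X)$ and $\mathcal{H}$ is mobile, the class
$$
\pi_*\bigl(\mathcal{H}^2\bigr)\;=\;4\mu N-\mu^2\,(4K_Z+B_X)
$$
is pseudo-effective, hence so is $N$ whenever $4K_Z+B_X$ is effective (on a surface base this is Iskovskikh's $\delta\geq 8$ computation $\mathcal{H}^2=\mu^2K_S^2+4\mu m\geq 0$). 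In your outline the hypothesis is instead given the incorrect job of ``ruling out horizontal maximal singularities,'' so as a plan to be executed your sketch has no step at which $B_X+4K_Z\geq 0$ does what it must do; it is rescued only by the concluding deferral to \cite{Sar81, MR1311348, MR1798984}. If you keep the exposition, these two points should be corrected, and you should also note (as the paper does) that the surface case over non-closed fields needs \cite{Mori82}, which is where the characteristic restriction can be relaxed in dimension two.
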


For our applications we need the following more precise version for which I could not find an explicit reference.

\begin{complem} \label{sark.thm.c}
Assume in addition that $B_X+K_Z$ is ample. Then the diagram
(\ref{sark.thm.1}) can be factored as
\begin{equation}
\begin{array}{ccccc}
X' & \stackrel{\rho}{\map} & X''& \stackrel{\tau}{\map} & X\\
\pi' \downarrow\hphantom{\pi'} && \hphantom{\pi''}\downarrow\pi''&& \hphantom{\pi}\downarrow\pi\\
Z' & \stackrel{\rho_Z}{\to} & Z &= & Z,\\
\end{array}
\label{sark.thm.c.1}
\end{equation}
where 
$\tau: X''\map X$ is a composite of elementary transformations,
$\rho:X'\map X''$ is a birational contraction and
 $-K_{X''}=\rho_*\bigl(-K_{X'}\bigr)$.
\end{complem}

\begin{proof} 
Let $\pi:X\to Z$ be a 
 conic bundle over a field of characteristic 0 such that $X,Z$ are smooth.
Pick a point $z\in Z$ and let $p:Z_1:=B_zZ\to Z$ denote the blow-up.
The pull-back  $X\times_ZZ_1$ is birational to  a minimal conic bundle $\pi_1:X_1\to Z_1$. 
We want to describe its branch locus  $B_{X_1}$.
It is clear that $B_{X_1}\supset p^{-1}_*B_X$, the only question is what happens with the exceptional curve $E_z$. The following possibilities are listed in \cite[2.4--2.5]{Sar81}.

If $z\not\in B_x$ then $X_1=X\times_ZZ_1$ hence  $B_{X_1}=p^{-1}_*B_X$.

If $z\in B_x$ is a smooth point then $E_z\subset B_{X_1}$ iff
the fiber $X_z$ irreducible. (Thus again $B_{X_1}=p^{-1}_*B_X$ if the base field is algebraically closed.)

If $z\in B_x$ is a singular point then $X_1=X\times_ZZ_1$ is singular.
Using the local equation $x_0^2=z_1x_1^2+z_2x_2^2$ we get
$x_0^2=z'_1x_1^2+z'_1z'_2x_2^2$. We can rewrite this as
$z'_1(x'_0)^2=x_1^2+z'_2x_2^2$ where $x'_0=x_0/z'_1$. Thus 
$B_{X'}=p^{-1}_*B_X+E_z$.
In all these cases we see that for any sequence of blow ups
$p_r:Z_r\to Z$ we have
\begin{equation}
K_{Z_r}+B_{X_r}=p_r^*(K_Z+B_X)+(\mbox{effective exceptional divisor}).
\label{sark.thm.c.2}
\end{equation}
Applying this to a common resolution  $Z'\leftarrow Z_r\to Z$
we conclude that if $K_Z+B_X$ is ample then  $(Z, B_X)$ is the unique log-canonical model of $(Z_r, B_{X_r})$. In particular, the rational map
$\phi_Z: Z'\map Z$ in (\ref{sark.thm.1}) is a morphism.
This establishes the bottom row of (\ref{sark.thm.c.1}).

In order to get the top row, let  $A'$ be an ample divisor on $Z'$ and choose $m$ such that  $|H'|:=|-K_{Z'}+m(p')^*A'|$ is a very ample linear system on $Z'$.
Its push-forward  $\phi_*|H'|$ is a mobile linear system consisting of rational double sections of $\pi$. Its base locus consists of some horizontal curves
(that is curves $C_i\subset X$ such that $C_i\to \pi(C_i)$ is birational) and some vertical curves
(that is  curves that are contracted by $\pi$). 
By a small variation of Claim \ref{el.transf.say.2},
a sequence of elementary transformations along the  horizontal  curves leads to a factorization
\begin{equation}
\phi: X'  \stackrel{\rho}{\map}  X'' \stackrel{\tau}{\map}  X
\label{sark.thm.c.3}
\end{equation}
where $\tau$ is a composite of elementary transformations and
the  base locus  $\bs\bigl(\rho_*|H'|\bigr)$ is vertical. 
Thus  $\rho^{-1}$ gives an injection $X''\setminus \bs\bigl(\rho_*|H'|\bigr)\into X'$, hence
it has no exceptional divisors. Thus $\rho$ is a birational contraction and therefore  $-K_{X''}=\rho_*\bigl(-K_{X'}\bigr)$. \qed \end{proof}


\subsection*{Acknowledgements} I thank A.~Corti, Y.~Gongyo, A.~Skorobogatov, C.~Xu and the referees   for helpful comments, corrections  and references.


\bibliographystyle{amsalpha}

\def\cprime{$'$} \def\cprime{$'$} \def\cprime{$'$} \def\cprime{$'$}
  \def\cprime{$'$} \def\cprime{$'$} \def\cprime{$'$} \def\dbar{\leavevmode\hbox
  to 0pt{\hskip.2ex \accent"16\hss}d} \def\cprime{$'$} \def\cprime{$'$}
  \def\polhk#1{\setbox0=\hbox{#1}{\ooalign{\hidewidth
  \lower1.5ex\hbox{`}\hidewidth\crcr\unhbox0}}} \def\cprime{$'$}
  \def\cprime{$'$} \def\cprime{$'$} \def\cprime{$'$}
  \def\polhk#1{\setbox0=\hbox{#1}{\ooalign{\hidewidth
  \lower1.5ex\hbox{`}\hidewidth\crcr\unhbox0}}} \def\cdprime{$''$}
  \def\cprime{$'$} \def\cprime{$'$} \def\cprime{$'$} \def\cprime{$'$}
\providecommand{\bysame}{\leavevmode\hbox to3em{\hrulefill}\thinspace}
\providecommand{\MR}{\relax\ifhmode\unskip\space\fi MR }
\providecommand{\MRhref}[2]{%
  \href{http://www.ams.org/mathscinet-getitem?mr=#1}{#2}
}
\providecommand{\href}[2]{#2}

\end{document}